\begin{document}

\def\Z{\mathbb{Z}}                   
\def\Q{\mathbb{Q}}                   
\def\C{\mathbb{C}}                   
\def\N{\mathbb{N}}                   
\def\R{\mathbb{R}}                   
\def\H{{\mathbb{H}}}                 

\def\Ra{{\sf R}}                      
\def\T{{\sf T}}                      
\def\t{{\sf t}}                       

\def\tr{{{\mathsf t}{\mathsf r}}}                 

\def\rc{{Rankin-Cohen }}
\def\rs{{Ramanujan-Serre type derivation }}
\def\DHR{{\rm DHR }}            

\def\P {\mathbb{P}}                  
\def\E{{\cal E}}
\def\L{{\sf L}}             
\def\CX{{\cal X}}
\def\dt{{\sf d}}             
\def\LG{{\sf G}}   
\def\LA{{\rm Lie}(\LG)}   
\def\amsy{\mathfrak{G}}  
\def\gG{{\sf g}}   
\def\gL{\mathfrak{g}}   
\def\rvf{{\sf H}}   
\def\cvf{{\sf F}}   
\def\di2{{ m}}   
\def\a{\mathfrak{a}}
\def\b{\mathfrak{b}}

\def\mmat{e}
\def\cmat{f}
\def\rmat{h}
\def\sl2{\mathfrak{sl}_2(\C)}
\def\SL2{{\rm SL}_2(\Z)}
\def\qmfs{\widetilde{\mathscr{M}}}             
\def\mfs{{\mathscr{M}}}                            
\def\cfs{{\mathscr{S}}}                            
\def\cyqmfs{{ \mathcal{M}}}             
\def\cymfs{{\mathcal{M}^2}}                            
\def\Ramvf{{\sf Ra}}             
\def\rcv{{\sf D}}             
\def\rcdo{{\mathcal{D}}}             
\def\rcdomf{{\mathscr{D}}}
\def\mdo{{\mathcal{R}}}             
\def\rdo{{\mathcal{H}}}             
\def\cdo{{\mathcal{F}}}             
\def\rsdo{{\partial}}             

\def\G0g{{\Gamma_0(N_g)}}             
\def\mdg{{\Delta_g}}             
\def\cmdg{{\delta_g}}             
\def\qmfg{{{E_g}_2}}             
\def\mfg{{\mathcal{E}_g}}             
\def\wmdg{{w_g}}             

\def\fvs{{t_1}}             
\def\svs{{t_2}}             
\def\tvs{{t_3}}             
\def\dfvs{{R^1}}             
\def\dsvs{{R^2}}             
\def\dtvs{{R^3}}             
\def\fss{{{\sf t}_1}}             
\def\sss{{{\sf t}_2}}             
\def\tss{{{\sf t}_3}}             
\def\wfss{{{\sf w}_1}}             
\def\wsss{{{\sf w}_2}}             
\def\wtss{{{\sf w}_3}}             
\def\wssj{{{\sf w}_j}}             
\def\wss{{{\sf w}}}             

\def\md{{\Delta}}             
\def\wmd{{{\sf w}}}             
\def\mf{{\rm j}}             
\def\bmf{{\sf t}}             
\def\vs{{t}}             
\def\prs{{\psi}}             
\def\nvs{{r}}             

\def\Yuk{{\sf Y}}                     
\def\X{{\sf X}}                      
\def\spec{{\rm Spec}}            

\def\too{{\mathscr{P}}_{3}}                      
\def\tot{\mathscr{Q}_{3}}                       
\def\toth{\mathscr{R}_{3}}                       
\def\tof{\mathscr{S}_{3}}                       

\def\tto{\mathscr{P}_{2}}                       
\def\ttt{\mathscr{Q}_{2}}                       
\def\ttth{\mathscr{R}_{2}}                       

\newtheorem{theo}{Theorem}[section]
\newtheorem{exam}{Example}[section]
\newtheorem{coro}{Corollary}[section]
\newtheorem{defi}{Definition}[section]
\newtheorem{prob}{Problem}[section]
\newtheorem{lemm}{Lemma}[section]
\newtheorem{prop}{Proposition}[section]
\newtheorem{rem}{Remark}[section]
\newtheorem{obs}{Observation}[section]
\newtheorem{conj}{Conjecture}
\newtheorem{nota}{Notation}[section]
\newtheorem{ass}{Assumption}[section]
\newtheorem{calc}{}
\numberwithin{equation}{section}

\begin{center}
{\LARGE\bf Ramanujan--type systems of nonlinear ODEs for $\Gamma_0(2)$ and $\Gamma_0(3)$ }
\footnote{ MSC2010:
11F11,   	
34A34  	
14J15,     
32M25,  	
11F30,  	
11F33.  	
\\
Keywords: Quasi-modular forms, Ramanujan system, modular vector field, Ramanujan tau function. }
\\

\vspace{.25in} {\large {\sc
Younes Nikdelan \footnote{Max Planck Institute for Mthematics (MPIM), Vivatsgasse 7, 53111, Bonn, Germany. e-mail: nikdelan@mpim-bonn.mpg.de}
\footnote{Departamento de An\'alise Matem\'atica, Instituto de Matem\'atica
e Estat\'{i}stica (IME), Universidade do Estado do
Rio de Janeiro (UERJ), Rua S\~{a}o Francisco Xavier, 524, Rio de Janeiro, Brazil / CEP: 20550-900. e-mail: younes.nikdelan@ime.uerj.br}
}} \\
\end{center}
\vspace{.25in}
\begin{abstract}
This paper aims to introduce two systems of nonlinear ordinary differential equations whose solution components generate the graded algebra of quasi-modular forms
 on Hecke congruence subgroups $\Gamma_0(2)$ and $\Gamma_0(3)$.
 Using these systems, we provide the generated graded algebras with an $\sl2$-module structure.
 As applications, we introduce  Ramanujan-type tau functions for $\Gamma_0(2)$ and $\Gamma_0(3)$, and obtain some interesting and non-trivial recurrence and congruence relations.

\end{abstract}


\section{Introduction}

Besides the classical works of Darboux-Halphen \cite{da78,ha81} and Ramanujan \cite{ra16}, we can find  other works in the literature which are dedicated
to the study of systems of ordinary differential equations (ODEs)
whose solution components can be written in terms of quasi-modular forms, see for instance \cite{ohy96, zud03, mai11, dgms}. What makes the results we present here 
distinct from the other studies are the
origin of the systems which are considered here, and  the elaboration of the similarities in comparison with the classical case of Ramanujan system.
Indeed, in a geometric framework, the considered systems can be seen as vector fields, known as \emph{modular
vector fields}, in certain enhanced moduli spaces arising from the Dwork family.
In general, solution components of modular vector fields generate the space of \emph{Calabi-Yau modular forms}, which are interesting
objects to study, see \cite{GMCD-MQCY3,younes3}. In lower dimensions $1$ and $2$, which are studied in this paper, these spaces of Calabi-Yau modular forms
coincide with the spaces of classical quasi-modular forms on  $\Gamma_0(3)$ and $\Gamma_0(2)$, respectively. Hence, looking at the modular vector fields
from the viewpoint of this paper can reveal new ways of proceeding with the theory of Calabi-Yau modular forms.

Srinivasa Ramanujan in 1916, while working on arithmetic properties and the relationships between divisor, Gamma and Riemann zeta functions \cite{ra16},
encountered the system of nonlinear ODEs:
\begin{equation} \label{eq ramanujan}
 {\Ramvf}:\left \{ \begin{array}{l}
t_1'=\frac{1}{12}(t_1^2-t_2) \\
t_2'=\frac{1}{3}(t_1t_2-t_3) \\
t_3'=\frac{1}{2}(t_1t_3-t_2^2)
\end{array} \right..
\end{equation}
Here, and throughout the paper, "$\ast'$" refers to the derivation:
\[
 \ast':=q\frac{\partial \ast}{\partial q}=\frac{1}{2\pi i}\frac{d}{d \tau}, \ {\rm with } \ q=e^{2\pi i\tau}, \ \tau \in \C \ 
 {\rm and }\  {\rm Im} (\tau)>0 .
\]
He showed that the triple $(E_2,E_4,E_6)$ of the Eisenstein series forms a particular solution of the system $\Ramvf$, where for $j=1,2,3$:
\begin{align}
&E_{2j}(q)=1+b_j\sum_{k=1}^\infty \sigma_{2j-1}(k) q^{k}\ {\rm with} \ (b_1,b_2,b_3)=(-24,240,-504),\label{eq eis ser} \\
&\sigma_j(k)=\sum_{d\mid k}d^j\,.
\end{align}
The system of equations \eqref{eq ramanujan} are known as the \emph{Ramanujan relations between Eisenstein series}, and from now on we call them
the \emph{Ramanujan system}.
We recall the following known facts.
\begin{itemize}
  \item $E_4$ and $E_6$ are modular forms and $E_2$ is a quasi-modular form for $\SL2$.
  \item If we denote the space of the full modular forms and full quasi-modular forms, respectively, by $\mfs(\SL2)$ and $\qmfs(\SL2)$,
  then $\mfs(\SL2)=\C[E_4,E_6]$ and $\qmfs(\SL2)=\C[E_2,E_4,E_6]$.
  \item If we consider the vector field representation of the Ramanujan system, i.e. $\Ramvf=\frac{1}{12}(t_1^2-t_2)\frac{\partial}{\partial t_1}+\frac{1}{3}(t_1t_2-t_3)\frac{\partial}{\partial t_2}+\frac{1}{2}(t_1t_3-t_2^2)\frac{\partial}{\partial t_3}$, which is known as \emph{Ramanujan vector field} as well, then $\Ramvf$ along with the vector fields $\rvf:=2t_1\frac{\partial}{\partial t_1}+4t_2\frac{\partial}{\partial t_2}+6t_3\frac{\partial}{\partial t_3}$ and $\cvf:=-12\frac{\partial}{\partial t_1}$ forms a copy of the Lie algebra $\sl2$.
  \item The modular discriminant
\begin{equation}\label{eq modular dis}
\Delta:=\frac{1}{1728}(E_4^3-E_6^2)=\eta^{24}(q)\,,
\end{equation}
is a cusp form of weight $12$ for $\SL2$ that satisfies (one can check this directly or using \eqref{eq ramanujan}):
\begin{equation}\label{eq Delta/Delta'}
  \Delta'=E_2\Delta \ \ (\textrm{which is equivalent to } \eta'=\frac{1}{24} E_2\,\eta)\,,
\end{equation}
where $\eta$ is the classical eta function:
\begin{align}
&\eta({q})={q}^{\frac{1}{24}}\prod_{k=1}^\infty (1-{q}^{k})\,.
\end{align}
\end{itemize}

The main objective of this paper is to establish analogous results for the congruence subgroups $\Gamma_0(2)$ and $\Gamma_0(3)$ of 
$\SL2$. They are stated in the following theorems. In what follows, for any non-negative integer $k$, we denote by $\mfs_k(\Gamma)$ and
$\qmfs_k(\Gamma)$, respectively, the space of modular forms and the space of quasi-modular forms of weight $k$ for a subgroup
$\Gamma \subset \SL2$. Moreover, $\mfs(\Gamma):=\bigoplus_{k=0}^\infty \mfs_k(\Gamma)$ and $\qmfs(\Gamma):=\bigoplus_{k=0}^\infty \qmfs_k(\Gamma)$
stand for the graded algebra of modular forms and quasi-modular forms, respectively. 
\begin{theo} \label{thm main}
Consider the following system of ODEs:
\begin{equation}
 \label{eq mvf R2 3}
 \Ra_{2}:  \left \{ \begin{array}{l}
t_1'=\frac{1}{8}\big(t_1^2-t_2^2\big) \\
t_2'=\frac{1}{4}\big(t_1t_2-t_3\big)\\
t_3'=\frac{1}{2}\big(t_1t_3-t_2^3\big)
\end{array} \right. .
\end{equation}
\begin{enumerate}
  \item A particular solution of $\Ra_2$ is given as follows:
  \begin{equation} \label{eq solution R2 3}
\left \{ \begin{array}{l}
t_1=\tto(q):=\frac{1}{3}\big(E_2(q)+2E_2(q^2)\big),\\
t_2=\ttt(q):=2E_2(q^2)-E_2(q),\\
t_3=\ttth(q):=\frac{1}{3}\big(4E_4(q^2)-E_4(q)\big),
\end{array} \right.
\end{equation}
with $\ttt \in \mfs_2(\Gamma_0(2))$, $\ttth \in \mfs_4(\Gamma_0(2))$ and $\tto \in \qmfs_2(\Gamma_0(2))$. Furthermore, if we set
\[
\md_2:=\eta^8({q})\eta^8({q^2}),
\]
then $\md$ is a cusp form of weigh $8$ for $\Gamma_0(2)$, $\Delta_2=\frac{1}{256}(\ttt^4-\ttth^2)$ and
\[
\md_2'=\tto\md_2.
\]
  \item We have:
  \begin{align}
   & \mfs(\Gamma_0(2))=\C[\ttt,\ttth] \\
   & \qmfs(\Gamma_0(2))=\C[\tto,\ttt,\ttth]
  \end{align}
  \item The vector field $\Ra_2=\frac{1}{8}(t_1^2-t_2^2)\frac{\partial}{\partial t_1}+\frac{1}{4}(t_1t_2-t_3)\frac{\partial}{\partial t_2}
  +\frac{1}{2}(t_1t_3-t_2^3)\frac{\partial}{\partial t_3}$  along with the vector fields 
  $\rvf_2:=2t_1\frac{\partial}{\partial t_1}+2t_2\frac{\partial}{\partial t_2}+4t_3\frac{\partial}{\partial t_3}$ 
  and $\cvf_2:=-8\frac{\partial}{\partial t_1}$ is isomorphic to the Lie algebra $\sl2$.
       \end{enumerate}
\end{theo}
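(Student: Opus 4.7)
\medskip
\noindent\textbf{Proof proposal.} I will prove the three items in the order stated, using part (1) as the input for parts (2) and (3).

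For part (1), the plan is to verify the system $\Ra_{2}$ by a direct Fourier-series computation. Combining the classical Ramanujan system \eqref{eq ramanujan} applied to $E_2(q),E_4(q),E_6(q)$ with the chain-rule observation that $[f(q^2)]' = 2q^2\,\partial_{q^2}f(q^2)$, I can write each of $\tto',\ttt',\ttth'$ as an explicit polynomial in $E_{2j}(q^i)$ for $i\in\{1,2\}$ and $j\in\{1,2,3\}$. Substituting \eqref{eq solution R2 3} into the right-hand sides of $\Ra_{2}$ yields polynomials only in $E_2(q^i), E_4(q^i)$. The equations for $\tto'$ and $\ttt'$ then collapse to polynomial identities that follow immediately; the equation for $\ttth'$, however, contains $E_6(q)$ and $E_6(q^2)$ on one side and cubic expressions such as $\ttt^3$ on the other, and requires a genuine modular-form identity rather than a polynomial one. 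Since both sides are quasi-modular of weight $6$ for $\Gamma_0(2)$ and this space is finite-dimensional, the identity is verified by comparing enough low-order Fourier coefficients.

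The formula $\md_2'=\tto\,\md_2$ follows by taking the logarithmic derivative and using $\eta'/\eta=\tfrac{1}{24}E_2$:
\[
\frac{\md_2'}{\md_2}=8\cdot\frac{E_2(q)}{24}+8\cdot\frac{2E_2(q^2)}{24}=\tto.
\]
The identity $\md_2=\tfrac{1}{256}(\ttt^4-\ttth^2)$ is proved by observing that the right-hand side is a weight-$8$ cusp form for $\Gamma_0(2)$ vanishing at both cusps; comparison of the leading $q$-coefficient with $\eta^8(q)\eta^8(q^2)$, in a space of cusp forms known to be one-dimensional in the relevant range, pins down the constant $\tfrac{1}{256}$. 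As a byproduct, $\md_2\neq0$ forces algebraic independence of $\ttt$ and $\ttth$.

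For part (2), I will use a graded Hilbert-series comparison. The explicit formulas in \eqref{eq solution R2 3} together with the classical modularity of $E_4$ and the identity $2E_2(q^2)-E_2(q)\in\mfs_2(\Gamma_0(2))$ show that $\ttt\in\mfs_2(\Gamma_0(2))$ and $\ttth\in\mfs_4(\Gamma_0(2))$. Using the algebraic independence established above, the inclusion $\C[\ttt,\ttth]\hookrightarrow\mfs(\Gamma_0(2))$ is a graded injection, and the standard dimension formula $\dim_\C\mfs_k(\Gamma_0(2))=1+\lfloor k/4\rfloor$ for even $k\ge 0$ matches the dimension of the weight-$k$ part of $\C[\ttt,\ttth]$ with $\deg\ttt=2,\deg\ttth=4$. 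Hence the inclusion is an equality. For the quasi-modular algebra, I invoke the structure theorem $\qmfs(\Gamma)=\mfs(\Gamma)[\varphi]$ valid for any weight-$2$ depth-$1$ quasi-modular form $\varphi$; since the depth-$1$ contribution of $\tto=\tfrac{1}{3}(E_2(q)+2E_2(q^2))$ is nonzero (as can be read off from the transformation laws of $E_2(\tau)$ and $E_2(2\tau)$), the identification $\qmfs(\Gamma_0(2))=\C[\tto,\ttt,\ttth]$ follows.

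Part (3) is a routine computation: $\rvf_2,\cvf_2,\Ra_2$ are polynomial vector fields on $\C^3$, and one checks directly that the three brackets $[\rvf_2,\cvf_2],[\rvf_2,\Ra_2],[\cvf_2,\Ra_2]$ match the structure constants of $\sl2$ under the identification $\rvf_2\leftrightarrow h,\ \Ra_2\leftrightarrow e,\ \cvf_2\leftrightarrow f$. The main obstacle in the whole argument is the verification of the third ODE in part (1): it does not reduce to a polynomial identity in $E_{2j}(q^i)$ and truly requires comparing $q$-expansions inside $\qmfs_6(\Gamma_0(2))$; every other step is either standard or mechanical.
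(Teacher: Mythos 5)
Your overall strategy for part (1) — substitute \eqref{eq solution R2 3} into $\Ra_2$, use the classical Ramanujan system together with the chain rule for $q\mapsto q^2$, and finish by coefficient comparison in a finite-dimensional space — is a legitimate alternative to the paper's route (the paper instead writes each difference, e.g.\ $\tto'-\tfrac18\tto^2$, as a Rankin--Cohen bracket divided by a power of $\Delta_2$, so that modularity of the difference is automatic). However, you have mislocated where the real work is. Carrying out your own reduction, the first equation $\tto'=\tfrac18(\tto^2-\ttt^2)$ does \emph{not} collapse to a formal polynomial identity: after substituting the Ramanujan relations for $E_2(q)$ and $E_2(q^2)$ it is equivalent to
\[
5\big(2E_2(q^2)-E_2(q)\big)^2=E_4(q)+4E_4(q^2),
\]
i.e.\ $\ttt^2=\tfrac15E_4(q)+\tfrac45E_4(q^2)$, which is exactly the kind of ``genuine modular-form identity'' you reserve for the third equation (only the second equation, $\ttt'=\tfrac14(\tto\ttt-\ttth)$, is a formal consequence). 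So as written your proof of the first ODE is missing; the fix is the same two-coefficient comparison in the two-dimensional space $\mfs_4(\Gamma_0(2))$, but it must be done. Relatedly, when you do compare coefficients you appeal to finite-dimensionality of $\qmfs_6(\Gamma_0(2))$; the paper's effectiveness bound (Corollary \ref{cor eff}) comes from the valence formula and applies to \emph{modular} forms only, so you should either check that the difference of the two sides is genuinely modular (its depth-one components cancel, which is what the paper's Rankin--Cohen formulation guarantees for free) or supply a Sturm-type bound for quasi-modular forms.

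The second genuine gap is the sentence ``As a byproduct, $\md_2\neq0$ forces algebraic independence of $\ttt$ and $\ttth$.'' It does not: $\md_2=\tfrac{1}{256}(\ttt^4-\ttth^2)\neq0$ rules out only the single relation $\ttt^4=\ttth^2$, i.e.\ $\ttt^2=\pm\ttth$, whereas algebraic independence requires excluding \emph{every} nonzero polynomial relation. Since your Hilbert-series argument in part (2) rests entirely on the injectivity of $\C[\ttt,\ttth]\hookrightarrow\mfs(\Gamma_0(2))$, this gap propagates. The paper closes it by showing, via the valence formula and ${\rm ord}_{[0]}(\Delta_2;\Gamma_0(2))=1$, that $\ttt$ and $\ttth$ have zeros at distinct points of $\H\cup\{0\}$, whence all monomials $\ttt^r\ttth^s$ of a fixed weight are linearly independent; alternatively, any homogeneous relation factors (over $\C$) into factors of the form $\ttt$, $\ttth$, and $\ttt^2-\lambda\ttth$, so it suffices to check that $\ttt^2/\ttth$ is non-constant, e.g.\ from the first two Fourier coefficients. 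Either argument should replace your one-line claim. The rest of the proposal — the logarithmic-derivative computation of $\md_2'/\md_2$, the dimension count $\dim\mfs_k(\Gamma_0(2))=\lfloor k/4\rfloor+1$ matching the weighted Hilbert series, the appeal to $\qmfs(\Gamma)=\mfs(\Gamma)[\varphi]$, and the bracket computation in part (3) — agrees with the paper and is fine, except that your identification of $\tfrac{1}{256}(\ttt^4-\ttth^2)$ with $\md_2$ via cusp forms quietly assumes vanishing at the cusp $[0]$; it is simpler to compare three coefficients in the three-dimensional space $\mfs_8(\Gamma_0(2))$, as the paper does.
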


\begin{theo} \label{thm main2}
Consider the following system of ODEs:
\begin{equation}
 \label{eq mvf R1 3}
 \Ra_{1}:  \left \{ \begin{array}{l}
t_1'=\frac{1}{6}\big(t_1^2-t_2^2\big) \\
t_2'=\frac{1}{3}\big(t_1t_2-t_2^2+54t_3\big)\\
t_3'=\frac{2}{3}t_1t_3+\frac{1}{3}t_2t_3+9t_4 \\
t_4'=t_1t_4+t_2t_4 
\end{array} \right. ,
\end{equation}
in which the polynomial relation $t_3^2-t_2t_4=0$ holds.
\begin{enumerate}
  \item A particular solution of $\Ra_1$ is given as follows:
  \begin{equation} \label{eq solution R1 3}
\left \{ \begin{array}{l}
t_1=\too (q):=\frac{1}{4}\big(E_2(q)+3E_2(q^3)\big),\\
t_2=\tot(q):=\frac{1}{2}\big(3E_2(q^3)-E_2(q)\big),\\
t_3=\toth(q):=\eta^8(q^3)+9\frac{\eta^8(q^3)\eta^3(q^9)}{\eta^3(q)},\\
t_4=\tof(q):=\left( \frac{\eta^9(q^3)}{\eta^3(q)} \right)^2,
\end{array} \right.
\end{equation}
with $\tot \in \mfs_2(\Gamma_0(3))$, $\toth \in \mfs_4(\Gamma_0(3))$, $\tof \in \mfs_6(\Gamma_0(3))$ and $\too \in \qmfs_2(\Gamma_0(3))$.
Moreover, if we set
\[
\md_3:=\eta^6({q})\eta^6({q^3}),
\]
then it is a cusp form of weigh $6$ for $\Gamma_0(3)$, $\Delta_3=\tot\toth-27\tof$ and
\[
\md_3'=\too\md_3.
\]
  \item If we consider $\too, \tot, \toth, \tof$ as free parameters and let ${\mathscr{I}}$ and $\tilde{\mathscr{I}}$ to be the ideals
  generated by $\toth^2-\tot\tof$ in $\C[\tot,\toth,\tof]$ and $\C[\too,\tot,\toth,\tof]$, respectively, then:
  \begin{align}
   & \mfs(\Gamma_0(3))\simeq \frac{\C[\tot,\toth,\tof]}{\mathscr{I}}\,, \\
   & \qmfs(\Gamma_0(3))\simeq \frac{\C[\too,\tot,\toth,\tof]}{\tilde{\mathscr{I}}}.
  \end{align}
  \item The vector field $\Ra_1=\frac{1}{6}(t_1^2-t_2^2)\frac{\partial}{\partial t_1}+\frac{1}{3}(t_1t_2-t_2^2+54t_3)\frac{\partial}{\partial t_2}
  +(\frac{2}{3}t_1t_3+\frac{1}{3}t_2t_3+9t_4)\frac{\partial}{\partial t_3}+(t_1t_4+t_3^2)\frac{\partial}{\partial t_4}$  along with the vector fields
  $\rvf_1:=2t_1\frac{\partial}{\partial t_1}+2t_2\frac{\partial}{\partial t_2}+4t_3\frac{\partial}{\partial t_3}+6t_3\frac{\partial}{\partial t_4}$ and
  $\cvf_1:=-6\frac{\partial}{\partial t_1}$ is isomorphic to the Lie algebra $\sl2$.
\end{enumerate}
\end{theo}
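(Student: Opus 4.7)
My plan tracks the three parts of the statement. The workhorse tools I would use throughout are the classical Ramanujan identities $E_2' = \frac{1}{12}(E_2^2 - E_4)$, $E_4' = \frac{1}{3}(E_2 E_4 - E_6)$, $E_6' = \frac{1}{2}(E_2 E_6 - E_4^2)$ coming from the original system $\Ramvf$, the logarithmic derivative $\eta'/\eta = \frac{1}{24} E_2$, and the chain rule $(f(q^n))' = n\, f'(q^n)$ that lets one push the derivation inside the substitution $q \mapsto q^n$.

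For Part 1, I would first pin down the modular weights of the four components in \eqref{eq solution R1 3}. The combination $3 E_2(q^3) - E_2(q)$ is a classical weight-$2$ modular form on $\Gamma_0(3)$, so $\tot \in \mfs_2(\Gamma_0(3))$ and $\too \in \qmfs_2(\Gamma_0(3))$ via $E_2 = 2\too - \tot$. For $\toth$ and $\tof$ I would apply the Ligozat--Newman criteria to the eta-quotients $\eta^8(q^3)$, $\eta^8(q^3)\eta^3(q^9)/\eta^3(q)$ and $\eta^{18}(q^3)/\eta^6(q)$, checking holomorphy at both cusps of $\Gamma_0(3)$ from the orders of vanishing. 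The identity $\md_3' = \too\, \md_3$ is immediate by logarithmic differentiation of $\md_3 = \eta^6(q)\eta^6(q^3)$. The four ODE identities are then verified by direct substitution: applying the derivation with the rules above reduces both sides of each equation to expressions in $E_2(q^j), E_4(q^j)$ for $j \in \{1,3\}$ and $\eta$-monomials, after which equality is a finite $q$-expansion check justified by the Sturm bound on $\Gamma_0(3)$. Finally, the polynomial constraint $\toth^2 = \tot\tof$ and the decomposition $\md_3 = \tot\toth - 27\tof$ reduce to pure eta identities, which I would verify by comparing $q$-expansions up to the Sturm bound of the appropriate weight.

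For Part 2, once $\tot,\toth,\tof$ are shown to generate, the isomorphism $\mfs(\Gamma_0(3)) \simeq \C[\tot,\toth,\tof]/(\toth^2 - \tot\tof)$ follows from a Hilbert series comparison. The classical dimension formula for $\Gamma_0(3)$ (genus $0$, two cusps, one elliptic point of order $3$) yields $\dim \mfs_k(\Gamma_0(3)) = 1 + \lfloor k/3 \rfloor$ for even $k \geq 2$, giving the Poincar\'e series $(1+T^4)/((1-T^2)(1-T^6))$, which matches term-by-term the Hilbert series of the presented quotient (basis $\tot^a \tof^b$ and $\tot^a \tof^b \toth$ modulo the relation). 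The quasi-modular statement follows by adjoining $\too$ via the Kaneko--Zagier decomposition $\qmfs(\Gamma_0(3)) = \mfs(\Gamma_0(3))[E_2]$ together with $E_2 = 2\too - \tot$.

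Part 3 is a direct polynomial bracket calculation among $\Ra_1$, $\rvf_1$, $\cvf_1$ in $\C[t_1,t_2,t_3,t_4]$: verify $[\rvf_1, \Ra_1] = 2\Ra_1$, $[\rvf_1, \cvf_1] = -2\cvf_1$, $[\Ra_1, \cvf_1] = \rvf_1$, and check that each bracket preserves the principal ideal $(t_3^2 - t_2 t_4)$ so that the $\sl2$-action descends to the coordinate ring of the variety cut out by the constraint. The main obstacle across all three parts is the eta-identity $\toth^2 = \tot\tof$: it underpins the structure theorem in Part 2 and the compatibility of the ODE with the polynomial constraint in Part 3, and finding a clean proof (either via an explicit Sturm-bound $q$-expansion check or, more elegantly, via the Fricke involution $\tau \mapsto -1/(3\tau)$ exchanging the two cusps of $X_0(3)$) is the step that carries the most weight.
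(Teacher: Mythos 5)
Your overall strategy --- reduce everything to finite $q$-expansion checks inside finite-dimensional spaces, count dimensions for the ring structure, and compute Lie brackets for the $\sl2$ part --- is the same as the paper's, but several of your devices differ. For the modularity of the generators you invoke Ligozat--Newman on the eta-quotients, whereas the paper obtains $\tot$ as the logarithmic derivative of the modular function $\eta^{24}(q^3)/\eta^{24}(q)$ and inherits $\toth,\tof$ from the weight-$1$ and weight-$3$ forms with character $\chi_{-3}$ discussed in Section 3; both routes work. For Part 2 you compare Hilbert series against $(1+T^4)/\big((1-T^2)(1-T^6)\big)$, while the paper exhibits an explicit monomial basis $\mathscr{B}_k$ whose elements are distinguished by $\mathrm{ord}_\infty(\tot^a\toth^b\tof^c)=b+2c$ and shows that monomials with equal order coincide modulo $\toth^2-\tot\tof$. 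Your Part 3 remark that the brackets must preserve the ideal $(t_3^2-t_2t_4)$ is a point the paper leaves implicit (note that the vector field in Part 3 writes $t_3^2$ where the ODE writes $t_2t_4$), and it is worth making explicit.

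Two steps need tightening. First, the equations of the system equate \emph{quasi-}modular forms, so the Sturm bound on $\Gamma_0(3)$ does not apply to them directly. You must first certify that the difference of the two sides of each equation is an honest modular form of known weight --- the paper does this by writing, e.g., $\too'-\tfrac16\too^2$ as a Rankin--Cohen bracket divided by a power of $\md_3$ --- or else expand both sides as polynomials in $E_2(q)$ with modular coefficients (using $E_2=2\too-\tot$ and $\qmfs(\Gamma_0(3))=\mfs(\Gamma_0(3))[E_2]$) and match coefficients of each power of $E_2$, after which Sturm applies to the modular coefficients. As written, ``reduce to expressions in $E_2(q^j)$ and apply the Sturm bound'' skips this. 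Second, your Hilbert-series argument is conditional on ``once $\tot,\toth,\tof$ are shown to generate,'' which is the substantive half of the claim: equality of Hilbert series only upgrades one of the two inequalities (spanning or linear independence) to an isomorphism, so one of them must be proved outright. The cheapest supply is the paper's: within a fixed weight $k$, the monomials reduced modulo $\toth^2-\tot\tof$ have pairwise distinct orders of vanishing at $\infty$, hence are linearly independent, and their number equals $\lfloor k/3\rfloor+1=\dim\mfs_k(\Gamma_0(3))$.
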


We call the systems \eqref{eq mvf R2 3} and \eqref{eq mvf R1 3} the \emph{Ramanujan-type systems} for $\Gamma_0(2)$ and $\Gamma_0(3)$, respectively.

Part {\bf $3$} of Theorem \ref{thm main} and Theorem \ref{thm main2} provides the  spaces $\qmfs\big(\Gamma_0(2)\big)$
and $\qmfs\big(\Gamma_0(3)\big)$ with an $\sl2$-module structure, respectively. This property, in general, is important on the one hand to assign correct weights to
the Calabi-Yau modular forms, see \cite{younes2,younes3}, and on the other hand to study
the dynamics of modular vector fields, see \cite{gui07, guireb}.

Modular forms, in particular their Fourier coefficients, play an important role in number theory. As applications, in Section \ref{section appl}
we use the modular forms and the relations given in the
above theorems to find some interesting and non-trivial recurrence formulas and congruence relations.

The author is aware that some of the proofs and details stated in this paper are maybe standard and known facts for specialists who work on modular forms.
Nevertheless, since he wishes to reach a wider class of readers and researchers in the other areas such as Differential Equations and Holomorphic Dynamical Systems,
he decided to include a section of preliminaries and basic facts, and also write the proofs in a more detailed form.

The structure of the present work is as follows. In Section \ref{section pbf} we state the preliminaries and basic facts for non-experts in
modular form theory, so the experts in this subject can simply skip this section. In Section \ref{section origin} we give the origin of Ramanujan
type systems \eqref{eq mvf R2 3} and \eqref{eq mvf R1 3}. Section \ref{section proof 1} and Section \ref{section proof 2}, respectively, are devoted to the
proofs of Theorem \ref{thm main} and Theorem \ref{thm main2}. In Section \ref{section ma} we find some more analogies for $\qmfs(\Gamma_0(2))$
and $\qmfs(\Gamma_0(3))$ in comparison to $\qmfs(\SL2)$.
Section \ref{section appl} comes up with some applications, where we introduce Ramanujan-type tau functions for $\Gamma_0(2)$ and $\Gamma_0(3)$ and give
some congruence and recurrence relations.

\bigskip

{\bf Acknowledgment.} Some parts of Section \ref{section appl} are results of valuable discussions that the author had during his stay at Max Planck
Institute for Mathematics (MPIM) in Bonn with Pieter Moree. So, he would like to express his sincere gratitude to him and he also wishes to thank MPIM
and its staff for preparing such an excellent ambience for doing mathematical work.

\section{Preliminaries and basic facts } \label{section pbf}
In order to make this paper self contained we start by recalling some basic and useful definitions, terminologies and known facts,
which can be found in any standard reference on modular forms, such as  \cite{bhgz,cs17,dishu}.

Throughout $\Gamma$ refers to a subgroup of $\SL2$ of the finite index which we denote by $[\SL2:\Gamma]$.
Any $\Gamma$ acts from the left on $\H^\ast:=\H\cup \P^1_\Q$ as follows:
\begin{equation}
 \gamma\cdot \tau=\frac{a\tau+b}{c\tau+d},\ \ \forall \tau \in \H^\ast \ {\rm and}  \ \forall \gamma=\left( {\begin{array}{*{20}c}
   {a} & {b}   \\
   { c}  & {d}   \\
\end{array}} \right) \in \Gamma,
\end{equation}
where ${\H}:=\{\tau\in \C \ : \ {\rm Im} (\tau)>0\}$ is the upper half-plane and $\P^1_\Q:=\Q\cup \{\infty\}$. Note that
$\gamma\cdot \infty=\frac{a}{c}$, and $\gamma\cdot \tau=\infty$ if $c\tau+d=0$. It is
well known that $X(\Gamma):=\Gamma\backslash \H^\ast$ is a compact Riemann surface.
We denote by
$\mathscr{C}(\Gamma):=\Gamma\backslash \P^1_\Q$ the set of the cusps of $\Gamma$, by $\overline{\Gamma}$ the image of $\Gamma$ in ${\rm PSL}_2(\Z):=\SL2/\{\pm 1\}$, and by
$d_\Gamma:=[{\rm PSL}_2(\Z):\overline{\Gamma}]$ the index of $\overline{\Gamma}$ in ${\rm PSL}_2(\Z)$. Evidently, if $\Gamma$ is an
\emph{even} subgroup, i.e. $-1\in \Gamma$, then $d_\Gamma=[\SL2:\Gamma]\ast'=q\frac{\partial \ast}{\partial q}=\frac{1}{2\pi i}\frac{d}{d \tau}\ \ and \ \ q=e^{2\pi i\tau}$, and if $\Gamma$ is an
\emph{odd} subgroup, i.e. $-1\notin \Gamma$, then $d_\Gamma=[\SL2:\Gamma]/2$. The \emph{width} $h$ of $\Gamma$ at the cusp $[\infty]$ is the least $h\in \N$ for which
at least one of { \tiny $\left( {\begin{array}{*{20}c}
   {1} & {h}   \\
   { 0}  & {1}   \\
\end{array}} \right)$} and { \tiny $\left( {\begin{array}{*{20}c}
   {-1} & {-h}   \\
   { 0}  & {-1}   \\
\end{array}} \right)$} belongs to $\Gamma$. If $\Gamma$ contains a cusp $P\in \mathscr{C}(\Gamma)$ different from $[\infty]$, then
there exists a $\gamma={ \tiny \left( {\begin{array}{*{20}c}
   {a} & {b}   \\
   { c}  & {d}   \\
\end{array}} \right)}\in \SL2$ such that $[\gamma\cdot \infty]=P$, and the width $h$ of $\Gamma$ at the cusp $P$ is
defined as the width of $\gamma^{-1}\Gamma \gamma$ at the cusp $[\infty]$ (which of course will be independent of the choice of $\gamma$).

For any integer $k$, by a \emph{weakly modular function} of weight $k$ on $\Gamma$ we mean a meromorphic function
$f:{\H} \to \C$ that satisfies the following modularity property:
\begin{equation} \label{eq modular functional}
 (f|_k\gamma)(\tau)=f(\tau),\ \ \forall \ \gamma=\left( {\begin{array}{*{20}c}
   {a} & {b}   \\
   { c}  & {d}   \\
\end{array}} \right) \in \Gamma,
\end{equation}
in which $(f|_k\gamma)(\tau):=(c\tau+d)^{-k} f(\gamma\cdot \tau)$.
Let $f$ be a weakly modular function of weight $k$ on $\Gamma$ and set $q_h:=e^{2\pi i \tau /h}$, where $h$ is the width of $\Gamma$ at $\infty$ and $\tau\in\H$. Then there exists
a meromorphic function $\tilde{f}$ on the punctured disc $\mathbb{D}^\ast:=\{z\in \C \ :\ 0<|z|<1\}$ such that either $f(\tau)=\tilde{f}(q_h)$
or $f(\tau)=q_h^{1/2}\tilde{f}(q_h)$, for all $\tau\in \H$. Hence, if we consider the Laurent expansion of $\tilde{f}$ in $0$, then we get
$f(\tau)=\sum_{n=-\infty}^{+\infty}a_n(f)q_h^n$, or $f(\tau)=\sum_{n=-\infty}^{+\infty}a_n(f)q_h^{n+1/2}$, where $a_n(f)\in \C$ for all
$n\in \Z$. This expansion is known as the \emph{$q$-expansion} of $f$ at $\infty$, and sometimes, by abuse of notation, we write $f(q_h)$ instead $f(\tau)$.
We say that $f$ is meromorphic at $\infty$ if there exists $N\in \Z$ such that $a_N(f)\neq 0$ and $a_n(f)=0$ for all $n<N$. The integer
$N$ is known as the order of $f$ at the cusp $[\infty]$ on $\Gamma$, and we write ${\rm ord}_\infty(f;\Gamma)=N$. Indeed, $f$ is
holomorphic at the cusp $[\infty]$ if ${\rm ord}_\infty(f;\Gamma)\geq 0$ and $f$ vanishes at $[\infty]$ if ${\rm ord}_\infty(f;\Gamma)> 0$.
If $P\in \mathscr{C}(\Gamma)$ is any other cusp of $\Gamma$, then the order of $f$ at the cusp $P$ on $\Gamma$, denoted by
${\rm ord}_P(f;\Gamma)$, is defined as ${\rm ord}_P(f;\Gamma):={\rm ord}_\infty(f|_k\gamma;\gamma^{-1}\Gamma\gamma)$, where $\gamma\in\SL2$ is
the same as above such that $[\gamma\cdot \infty]=P$ (one can see that this definition is independent of the choice of $\gamma$).
Thus, $f$ is meromorphic at $P$ if ${\rm ord}_P(f;\Gamma)\in \Z$, $f$ is holomorphic at $P$ if ${\rm ord}_P(f;\Gamma)\geq 0$,
and $f$ vanishes at $P$ if ${\rm ord}_P(f;\Gamma)>0$. Indeed, if $f$ is holomorphic at $P$, then one can define $f(P):=(f|_k\gamma)(\infty)$
provided $k$ is even, but for odd $k$'s $f(P)$ can be defined up to sign.

A \emph{weakly modular form} of weight $k$ on $\Gamma$ is a weakly modular function $f:\H \to \C$ of weight $k$ on $\Gamma$ that is
holomorphic in $\H$. Moreover, if $f$ is holomorphic in the all cusps of $\Gamma$, then we call it a \emph{modular form} of weight $k$
on $\Gamma$. A modular form $f$ of weight $k$ on $\Gamma$ is called a \emph{cusp form} of weight $k$ on $\Gamma$ if it vanishes
in all cusps of $\Gamma$. We denote the space of weakly modular functions, weakly modular forms, modular forms and cusp forms of
weight $k$ on $\Gamma$,
respectively, by $\mfs^\ast_k(\Gamma)$, $\mfs^!_k(\Gamma)$, $\mfs_k(\Gamma)$ and $\cfs_k(\Gamma)$, which are $\C$-vector spaces.

We state below the valence formula (for more details see \cite[\S 5.6.2 and Theorem 5.6.11]{cs17}). Here
$e_\Gamma(\tau)$ refers to the order of the stabilizer of $\tau$ in $\overline{\Gamma}$. If $e_\Gamma(\tau)>1$, then $\tau$ is called an
\emph{elliptic point} of $\Gamma$ (or $X(\Gamma)$). In fact, $e_\Gamma(\tau)\in \{1,2,3\}$, and if $\tau$ does not belong to the $\SL2$-orbit
of $i:=\sqrt{-1}$ or $e^{2\pi i/3}$, then $e_\Gamma(\tau)=1$.

\begin{theo}{\bf (The valence formula)} \label{thm valence}
Let $f\in \mfs^\ast_k(\Gamma)$ be a non-zero weakly modular function. Then the following holds:
 \begin{equation}
  \sum_{\tau\in\Gamma\backslash \H} \frac{{\rm ord}_\tau (f)}{e_\Gamma(\tau)}+\sum_{P\in \mathscr{C}(\Gamma)}{\rm ord}_P (f;\Gamma)
  =d_\Gamma\frac{k}{12}.
 \end{equation}
\end{theo}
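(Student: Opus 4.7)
The plan is to apply the argument principle to the logarithmic differential $\frac{df}{f}$ along the boundary of a suitably chosen fundamental domain for $\Gamma$, and then evaluate the same contour integral a second way using the modular transformation law of $f$. This is the classical $\SL2$ contour-integration argument, scaled by the covering degree $d_\Gamma$.

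First, I would fix right-coset representatives $\gamma_1 = I, \gamma_2, \ldots, \gamma_{d_\Gamma}$ for $\overline{\Gamma}$ in ${\rm PSL}_2(\Z)$ and take the fundamental domain $\mathcal{F} := \bigcup_{j} \gamma_j \cdot \mathcal{F}_0$, where $\mathcal{F}_0$ is the standard fundamental domain for ${\rm PSL}_2(\Z)$. I would perturb $\partial \mathcal{F}$ in the usual way: small circular arcs excise each zero or pole of $f$ on $\partial \mathcal{F} \cap \H$ along with each elliptic fixed point of $\Gamma$, and a horocycle truncates each cusp $[\gamma_j \cdot \infty]$ at sufficiently large imaginary height. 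By the argument principle, the integral $\frac{1}{2\pi i}\oint_{\partial \mathcal{F}}\frac{f'}{f}\,d\tau$ equals $\sum_{\tau \in \Gamma\backslash \H} {\rm ord}_\tau(f)/e_\Gamma(\tau) + \sum_{P \in \mathscr{C}(\Gamma)} {\rm ord}_P(f;\Gamma)$: interior zeros and poles contribute their full orders; at an elliptic point $\tau_0$ the excising arcs subtend a total angle of $2\pi/e_\Gamma(\tau_0)$ in the $\tau$-coordinate (aggregating all boundary appearances of $\tau_0$), yielding the fractional contribution ${\rm ord}_{\tau_0}(f)/e_\Gamma(\tau_0)$; and each truncating horocycle becomes, in the local uniformizer $q_h = e^{2\pi i \sigma/h}$ (with $\sigma = \gamma_j^{-1}\tau$), one loop around $0$, giving ${\rm ord}_P(f;\Gamma)$.

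For the second evaluation I would pair the boundary arcs of $\mathcal{F}$ via the gluings by $\Gamma$: each edge of $\gamma_j \mathcal{F}_0$ is matched with another edge by some $\gamma \in \Gamma$. Differentiating the modular relation $f(\gamma\tau) = (c\tau+d)^k f(\tau)$ yields
\[
\frac{f'(\gamma\tau)}{f(\gamma\tau)}\,d(\gamma\tau) \;=\; \frac{f'(\tau)}{f(\tau)}\,d\tau \;+\; \frac{kc}{c\tau+d}\,d\tau,
\]
so each glued pair contributes $-\frac{k}{2\pi i}\int d\log(c\tau+d)$. Summing over all pairings, the horizontal horocycle pieces contribute nothing ($c\tau+d$ has trivial winding there), while the remaining arcs assemble into exactly $d_\Gamma$ copies of the classical ${\rm PSL}_2(\Z)$ boundary computation, which famously gives $k/12$ per copy. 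Equating the two evaluations yields the valence formula.

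The main obstacle is the bookkeeping around elliptic points and cusps of possibly differing widths. One must verify that at an elliptic point of order $e$ the correct local uniformizer on $X(\Gamma)$ is $(\tau - \tau_0)^e$, so the excised residue is exactly ${\rm ord}_{\tau_0}(f)/e$; that the horocycle at each cusp of width $h$ contributes precisely ${\rm ord}_P(f;\Gamma)$ (with the right orientation relative to the domain) while producing nothing $k$-dependent; and that the glued boundary pairings reassemble into $d_\Gamma$ copies of the standard calculation. The appearance of $d_\Gamma = [{\rm PSL}_2(\Z):\overline{\Gamma}]$ rather than the full index in $\SL2$ is natural here because the boundary integrals are insensitive to the sign of $\gamma \in \SL2$.
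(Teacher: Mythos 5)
The paper does not actually prove this statement: it quotes the valence formula as a known fact and points to Cohen--Str\"omberg \cite[\S 5.6.2 and Theorem 5.6.11]{cs17}, so there is no in-paper argument to measure yours against. Your sketch is the standard contour-integration proof. Its first half is sound: integrating $\frac{1}{2\pi i}\frac{f'}{f}\,d\tau$ over the indented boundary of $\mathcal{F}=\bigcup_j\gamma_j\mathcal{F}_0$ does produce $\sum_{\tau}{\rm ord}_\tau(f)/e_\Gamma(\tau)+\sum_P{\rm ord}_P(f;\Gamma)$, and your local checks (total angle $2\pi/e_\Gamma(\tau_0)$ excised at an elliptic point, one loop in the uniformizer $q_h$ at a cusp of width $h$, no $k$-dependent contribution from the horocycles) are exactly the right ones.

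The gap is in the second evaluation. The side pairings of $\mathcal{F}$ are effected by elements of $\Gamma$, not by $\Gamma$-conjugates of $S$ and $T$, so the correction terms $-\frac{k}{2\pi i}\int d\log(c\tau+d)$ do \emph{not} literally reassemble into $d_\Gamma$ copies of the ${\rm PSL}_2(\Z)$ boundary computation; the sentence asserting that they do is where the entire content of the constant $d_\Gamma k/12$ is hiding. To close it you need either a genuinely general bookkeeping argument (the sum of those correction terms over all pairings equals $\frac{k}{4\pi}$ times the hyperbolic area of $\mathcal{F}$, which is $d_\Gamma\cdot\frac{\pi}{3}$, whence $\frac{kd_\Gamma}{12}$), or one of the two standard shortcuts that avoid redoing the contour integral for general $\Gamma$: apply the already-established $\SL2$ case to the norm $F:=\prod_{j=1}^{d_\Gamma}(f|_k\gamma_j)$, which is weakly modular of weight $kd_\Gamma$ on $\SL2$ and whose orders aggregate to the left-hand side of the formula; or observe that $f^{12}/\Delta^{k}$ has weight $0$, hence descends to a meromorphic function on the compact Riemann surface $X(\Gamma)$ whose divisor has degree zero, and unwind what its orders are at elliptic points (where the $1/e_\Gamma$ weights appear) and at the cusps (where $\Delta$ has order equal to the width, and the widths sum to $d_\Gamma$). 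Any of these repairs turns your outline into a complete proof.
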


\begin{rem}
 Note that if $f\in\mfs_k(\Gamma)$ is a modular form of
weight $k$, then for any $\tau\in\H$ and any $P\in \mathscr{C}(\Gamma)$, ${\rm ord}_\tau (f)\geq 0$ and ${\rm ord}_P (f;\Gamma)\geq 0$.
In particular, ${\rm ord}_\tau (f)= 0$ or ${\rm ord}_P (f;\Gamma)= 0$ if and only if $f(\tau)\neq 0$ or $f(P)\neq 0$, respectively. Hence,
the valence formula implies that:
 \begin{description}
  \item {\bf (i)} if $k<0$, then $\mfs_k(\Gamma)=0$,
  \item {\bf (ii)} $\cfs_0(\Gamma)=0$, since for non-zero cusp forms $f$ we have,
        $\sum_{P\in \mathscr{C}(\Gamma)}{\rm ord}_P (f;\Gamma)>0$,
  \item {\bf (iii)} $\mfs_0(\Gamma)=\C$, since for any non-constant modular forms
        $f \in \mfs_0(\Gamma)$ we obtain that $(f-f(\infty))\in \mfs_0(\Gamma)$ is a non-zero modular form and
        $\sum_{P\in \mathscr{C}(\Gamma)}{\rm ord}_P (f-f(\infty);\Gamma)>0$.
 \end{description}
Therefore, we can consider the graded algebras (or the spaces) of modular forms
and cusp forms on $\Gamma$, respectively, as $\mfs(\Gamma)=\bigoplus_{k=0}^\infty \mfs_k(\Gamma)$ and
$\cfs(\Gamma)=\bigoplus_{k=0}^\infty \cfs_k(\Gamma)$.
\end{rem}

Another immediate result of the valence formula is the following very useful fact.
\begin{coro} {\bf (The effectiveness of modular forms)} \label{cor eff}
Let $f,g\in\mfs_k(\Gamma)$, for some non-negative integer $k$, having $q$-expansions:
  $f(q)=\sum_{j=0}^\infty a_f(j)q^j$ and $g(q)=\sum_{j=0}^\infty a_g(j)q^j$.
If $a_f(j)=a_g(j)$ for all $0\leq j \leq \lfloor d_\Gamma\tfrac{k}{12} \rfloor$, then $f=g$.
 \end{coro}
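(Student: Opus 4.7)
The plan is to reduce the statement to the valence formula (Theorem \ref{thm valence}) applied to the difference $h := f - g \in \mfs_k(\Gamma)$. Since modular forms of weight $k$ form a $\C$-vector space, $h$ itself lies in $\mfs_k(\Gamma)$, and the claim becomes: a modular form $h \in \mfs_k(\Gamma)$ whose first $\lfloor d_\Gamma k/12 \rfloor + 1$ Fourier coefficients at $\infty$ vanish must be identically zero.

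First I would argue by contradiction: assume $h \not\equiv 0$. Because $a_j(h) = a_f(j) - a_g(j) = 0$ for all $0 \leq j \leq \lfloor d_\Gamma k/12 \rfloor$, the $q$-expansion of $h$ at $\infty$ starts at some index $N \geq \lfloor d_\Gamma k/12 \rfloor + 1$, so
\[
\operatorname{ord}_\infty(h; \Gamma) \;\geq\; \lfloor d_\Gamma k/12 \rfloor + 1 \;>\; d_\Gamma \frac{k}{12}.
\]
Next I would invoke the fact that, as a modular form, $h$ is holomorphic on $\H$ and at every cusp; therefore $\operatorname{ord}_\tau(h) \geq 0$ for every $\tau \in \Gamma \backslash \H$ and $\operatorname{ord}_P(h;\Gamma) \geq 0$ for every $P \in \mathscr{C}(\Gamma)$. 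Combined with $e_\Gamma(\tau) \geq 1$, every term in the left-hand side of the valence formula is nonnegative.

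Finally, isolating the contribution of the cusp $[\infty]$ in the valence formula gives
\[
d_\Gamma\frac{k}{12} \;=\; \sum_{\tau \in \Gamma\backslash\H}\frac{\operatorname{ord}_\tau(h)}{e_\Gamma(\tau)} + \sum_{P\in\mathscr{C}(\Gamma)}\operatorname{ord}_P(h;\Gamma) \;\geq\; \operatorname{ord}_\infty(h;\Gamma) \;>\; d_\Gamma\frac{k}{12},
\]
which is the desired contradiction. Hence $h \equiv 0$, i.e.\ $f = g$.

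I do not anticipate a genuine obstacle: the entire argument is a one-line consequence of the valence formula, the only mild subtlety being the strict inequality $\lfloor x \rfloor + 1 > x$ that upgrades "vanishing up to index $\lfloor d_\Gamma k/12 \rfloor$" into "order strictly greater than $d_\Gamma k/12$" at the cusp $\infty$. One should also implicitly use that $k \geq 0$ (otherwise $\mfs_k(\Gamma) = 0$ by the preceding remark and the statement is trivial), so that the bound $\lfloor d_\Gamma k/12 \rfloor$ is a well-defined nonnegative integer.
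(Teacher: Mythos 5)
Your argument is correct and is exactly the paper's proof: form $h=f-g$, observe that the vanishing of the first $\lfloor d_\Gamma k/12\rfloor+1$ coefficients forces $\operatorname{ord}_\infty(h;\Gamma)>d_\Gamma k/12$, and conclude $h\equiv 0$ from the valence formula since all other terms are nonnegative. The paper states this in one line; you have merely spelled out the same steps.
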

 \begin{proof}
  Let $h:=f-g$. Then $h\in \mfs_k(\Gamma)$ and by hypothesis ${\rm ord}_\infty (f;\Gamma)\geq d_\Gamma\frac{k}{12}+1$, hence the valence formula implies that
  $h\equiv 0$.
 \end{proof}

A \emph{quasi-modular form} of weight $k$ and depth $p$ on $\Gamma$ is a holomorphic function $f$ of moderate growth such that for all
$\tau \in \H$ and $\gamma={ \tiny \left( {\begin{array}{*{20}c}
   {a} & {b}   \\
   { c}  & {d}   \\
\end{array}} \right)}\in \Gamma$ we get:
\[
 (f|_k \gamma)(\tau)=\sum_{j=0}^pf_j(\tau)\left(\frac{c}{c\tau+d}\right)^j,
\]
where $f_1,f_2,\ldots,f_p$ are holomorphic functions of moderate growth, $f_0=f$ and $f_p\neq 0$. We denote the space of
quasi-modular forms of weight $k$ on $\Gamma$ by $\qmfs_k(\Gamma)$, and the space (or graded algebra) of quasi-modular forms
on $\Gamma$ by $\qmfs(\Gamma)=\bigoplus_{k=0}^\infty \qmfs_k(\Gamma)$.  For example one can check that
(see for instance \cite{bhgz}) for all $\gamma={ \tiny \left( {\begin{array}{*{20}c}
   {a} & {b}   \\
   { c}  & {d}   \\
\end{array}} \right)}\in \SL2$:
\[
 (E_2|_2\gamma)(\tau)=E_2(\tau)-\frac{6i}{\pi}\frac{c}{c\tau+d},
\]
hence $E_2$ is quasi-modular form of weight $2$ and depth $1$ on $\SL2$. Or, if $f\in \mfs_k(\Gamma)$ for some integer $k\geq 0$, then one
can easily observe that for all $\gamma={\tiny  \left( {\begin{array}{*{20}c}
   {a} & {b}   \\
   { c}  & {d}   \\
\end{array}} \right)}\in \Gamma$:
\begin{equation} \label{eq f'}
 (f'|_{k+2}\gamma)(\tau)=f'(\tau)+\frac{k}{2\pi i}f(\tau)\frac{c}{c\tau+d},
\end{equation}
which implies that $f'$ is a quasi-modular form of weight $k+2$ and depth $1$ on $\Gamma$. In particular, if $k>0$, then $f'$ is not
a modular form, and therefore the space of modular forms is not closed under the differentiation.
\begin{rem} \label{rem qmfs=mfs[lambda]}
 Let $\Gamma$ be a non-compact subgroup of $\SL2$. Due to \cite[Proposition 20]{bhgz}, the space of quasi-modular forms $\qmfs(\Gamma)$
 is closed under differentiation. Furthermore, there exists a $\lambda\in\qmfs_2(\Gamma)\setminus \mfs_2(\Gamma)$ such that any quasi-modular form
 on $\Gamma$ is a polynomial in $\lambda$ with modular coefficients, i.e., $\qmfs(\Gamma)=\mfs(\Gamma)[\lambda]$.
\end{rem}

As we observed above the derivative of a modular form is not necessarily a modular form, but there are certain combinations
of modular forms and their derivatives which are again modular forms. More precisely, for any non negative integer $n$ and
any $f\in \mfs_k(\Gamma),  \  g\in \mfs_l(\Gamma), \ k,l\in \Z_{\geq 0}$, the $n$-th Rankin-Cohen bracket $[f,g]_n$ is defined as:
\begin{equation}\label{eq rcb}
  [f,g]_n:=\sum_{i+j=n}(-1)^j\binom{n+k-1}{i}\binom{n+l-1}{j}f^{(j)}g^{(i)} \, ,
\end{equation}
where $f^{(j)}$ and $g^{(j)}$ refer to the $j$-th derivative of $f$ and $g$ with respect to the derivation $\ast'$ given in
\eqref{eq ramanujan}. Cohen \cite{coh77} proved that:
\begin{equation} \label{eq RCb is mf}
 [f,g]_k\in \mathscr{M}_{k+l+2n}(\Gamma).
\end{equation}
Zagier \cite{zag94} introduced Rankin-Cohen algebraic structures and studied them.

\begin{rem} \label{rem basic}
 Since in this paper we are considering $\Gamma=\Gamma_0(2)$ and $\Gamma=\Gamma_0(3)$, we summarize some important properties and facts about
 $\Gamma_0(N)$ valid for all $N\in \N$. Recall that
 \[
  \Gamma_0(N):=\left\{ \left( {\begin{array}{*{20}c}
   {a} & {b}   \\
   { c}  & {d}   \\
\end{array}} \right) \in \SL2 \ : \ \left( {\begin{array}{*{20}c}
   {a} & {b}   \\
   { c}  & {d}   \\
\end{array}} \right) \equiv \left( {\begin{array}{*{20}c}
   {\ast} & {\ast}   \\
   { 0}  & {\ast}   \\
\end{array}} \right) \ ({\rm mod} \ N) \right\},
 \]
is an even congruence subgroup of $\SL2$ of finite index. We have the following facts, in which we suppose that $N\in\N$ and $p$ is a prime
number.
\begin{description}
 \item {\bf (i)} If $k$ is odd, then $\mfs_k(\Gamma_0(N))=0$.
 \item {\bf (ii)} If $f(\tau)\in \mfs_k\big(\SL2\big)$ or $f(\tau)\in\cfs_k\big(\SL2\big)$, then $f(N\tau)\in\mfs_k\big(\Gamma_0(N)\big)$ or
 $f(N\tau)\in\cfs_k\big(\Gamma_0(N)\big)$, respectively.
 \item {\bf (iii)} $d_{\Gamma_0(p)}=[\SL2:\Gamma_0(p)]=p+1$.
 \item {\bf (iv)} $\mathscr{C}(\Gamma_0(p))=\{[\infty],[0]\}$, and the width of $\Gamma_0(p)$ at $[\infty]$ and $[0]$, respectively,
is $1$ and $p$.
 \item {\bf (v)} If we denote the number of elliptic points of period $2$ in $X_0(p):=\Gamma_0(p)\backslash \H^\ast$ by $\varepsilon_2$,
then $\varepsilon_2=2$ if $p \equiv 1\ ({\rm mod}\ 4)$, $\varepsilon_2=0$ if $p \equiv 3\ ({\rm mod} \ 4)$, and $\varepsilon_2=1$ if $p =2$.
 \item {\bf (vi)} If we denote the number of elliptic points of period $3$ in $X_0(p)$ by $\varepsilon_3$, then $\varepsilon_3=2$ if
$p \equiv 1\ ({\rm mod}\ 3)$, $\varepsilon_3=0$ if $p \equiv 2\ ({\rm mod} \ 3)$, and $\varepsilon_3=1$ if $p =3$.
 \item {\bf (vii)} If we denote by $g$ the genus of $X_0(p)$, then $g=\lfloor \frac{p+1}{12}\rfloor-1$ if $p+1 \equiv 2\ ({\rm mod} \ 12)$,
and $g= \lfloor \frac{p+1}{12}\rfloor$ otherwise.
 \item {\bf (viii)} If $k\geq 2$ is an even integer, then $\dim \mfs_k(\Gamma_0(p))=(k-1)(g-1)+\lfloor \frac{k}{4} \rfloor \varepsilon_2+
\lfloor \frac{k}{3} \rfloor \varepsilon_3+k$.
 \item {\bf (ix)} If $k\geq 4$ is an even integer, then $\dim \cfs_k(\Gamma_0(p))=(k-1)(g-1)+\lfloor \frac{k}{4} \rfloor \varepsilon_2+
\lfloor \frac{k}{3} \rfloor \varepsilon_3+k-2$, and $\dim \cfs_2(\Gamma_0(p))=g$.
\end{description}
For more details see, for example, \cite{dishu}.
\end{rem}

\begin{rem} \label{rem eta quotient}
It is well known that $\eta(q)$ is a modular form of weight $1/2$ (and non-trivial character) on $\SL2$ which does not vanish on $\H$.
Due to \eqref{eq Delta/Delta'} we know that
$\eta'(q) / \eta(q)=\frac{1}{24}E_2(q)$, which is a quasi-modular form of weight $2$ on $\SL2$.
If for $g=r_1^{t_1}r_2^{t_2}\ldots r_s^{t_s}, \, r_j> 0 , \, t_j\in \Z,$ we suppose that the $\eta$-quotient $\eta_g(q):=\prod_{j=1}^{s}\eta^{t_j}(q^{r_j})$ is a modular form (modular function) on $\Gamma$, then:
\begin{equation} \label{eq eta quotient}
\frac{\eta_g'(q)}{\eta_g(q)}=\left(\log (\eta_g(q))\right)'=\sum_{j=1}^{s} \frac{r_jt_j}{24}E_2(q^{r_j}).
\end{equation}
It is a quasi-modular form of weight $2$ on $\Gamma$ (note that $\eta_g$ does not vanish on $\H$,
and, $\eta_g$ and $\eta'_g$ have the same vanishing order at $\infty$). Moreover, if $\eta_g$ is a modular function (i.e., weakly
modular function of weight $0$), then using \eqref{eq f'} we conclude that $\eta_g'(q) / \eta_g(q)$ is a modular form of weight $2$ on $\Gamma$.
\end{rem}
\section{Origin of the systems $\Ra_1$ and $\Ra_2$ } \label{section origin}

The initial version of the systems \eqref{eq mvf R2 3} and  \eqref{eq mvf R1 3} first appeared in  \cite{movnik},
where by applying an algebraic method, called \emph{Gauss-Manin connection in disguise} (GMCD),  in a geometric context the author and Hossein Movasati
found these systems as a unique vector field on a special moduli space that satisfies certain conditions. More precisely,
for any positive integer $n$ we obtained a one-parameter family $X:=X_z$, $z\in \P^1\setminus \{0,1,\infty\}$, of Calabi-Yau
$n$-folds arising from the Dwork family:
\[
W_z:=\big\{zx_0^{n+2}+x_1^{n+2}+x_2^{n+2}+\cdots+x_{n+1}^{n+2}-(n+2)x_0
x_1x_2\cdots x_{n+1}=0\big\}\subset \P^{n+1},
\]
and we introduced the moduli space $\T$ of the pairs $(X,[\alpha_1,\alpha_2,\ldots,\alpha_{n+1}])$,
in which  $\{\alpha_1,\alpha_2,\ldots,\alpha_{n+1}\}$ is a basis of the $n$-th algebraic de Rham cohomology $H^n_{\rm dR}(X)$ satisfying some
specific properties. In the main theorem of the same work \cite{movnik} we proved that there exist a unique vector field $\Ra:=\Ra_n$ and
regular functions $\Yuk_j, \ j=1,2,\ldots,n-2$ in $\T$
such that the Gauss-Manin connection of the universal family of $\T$ composed with the vector field $\Ra$,
namely $\nabla_{\Ra}$, satisfies:
\begin{equation}
\label{jimbryan}
\nabla_{\Ra}
\begin{pmatrix}
\alpha_1\\
\alpha_2 \\
\alpha_3 \\
\vdots \\
\alpha_n \\
\alpha_{n+1} \\
\end{pmatrix}
= \underbrace {\begin{pmatrix}
0& 1 & 0&0&\cdots &0&0\\
0&0& \Yuk_1&0&\cdots   &0&0\\
0&0&0& \Yuk_2&\cdots   &0&0\\
\vdots&\vdots&\vdots&\vdots&\ddots   &\vdots&\vdots\\
0&0&0&0&\cdots   &\Yuk_{n-2}&0\\
0&0&0&0&\cdots   &0&-1\\
0&0&0&0&\cdots   &0&0\\
\end{pmatrix}}_\Yuk
\begin{pmatrix}
\alpha_1\\
\alpha_2 \\
\alpha_3 \\
\vdots \\
\alpha_n \\
\alpha_{n+1} \\
\end{pmatrix}.
\end{equation}
Indeed, we get $\T=\spec\big(\C[t_1,t_2,\ldots, t_{\dt},\frac{1}{t_{n+2}(t_{n+2}-t_1^{n+2}) }]\big)$, where:
\begin{equation}
\label{29dec2015} \dt=\dt_n=\dim \T=\left \{
\begin{array}{l}
\frac{(n+1)(n+3)}{4}+1,\,\, \quad  \textrm{\rm if \textit{n} is odd};
\\\\
\frac{n(n+2)}{4}+1,\,\,\,\,\ \quad\quad \textrm{\rm if \textit{n} is
even}.
\end{array} \right.
\end{equation}
In particular, for $n=1$ we computed the vector field $\Ra_1$ explicitly as follows:
\[
\Ra_1=\left(-t_1t_2-9(t_1^3-t_3)\right)\frac{\partial}{\partial t_1}+\left(81t_1(t_1^3-t_3)-t_2^2\right)\frac{\partial}{\partial t_2}
+\left( -3t_2t_3 \right)\frac{\partial}{,\partial t_3},
\]
and for $n=2$ we got:
\[
\Ra_2=\left(t_3-t_1t_2\right)\frac{\partial}{\partial t_1}+\left(2t_1^2-\frac{1}{2}t_2^2\right)\frac{\partial}{\partial t_2}
+\left(-2t_2t_3+8t_1^3\right)\frac{\partial}{\partial t_3}+\left(-4t_2t_4\right)\frac{\partial}{\partial t_4},
\]
where for $\Ra_2$ the following polynomial equation holds among the $t_i$'s
\begin{equation}\label{eq t3}
 t_3^2=4(t_1^4-t_4).
\end{equation}
We explain below how to get the systems \eqref{eq mvf R2 3} and \eqref{eq mvf R1 3} from $\Ra_2$ and $\Ra_1$, respectively.

For $n=2$ let us consider the following representation of $\Ra_2$ as a system of ODEs:
\begin{equation}
 \label{eq mvf R2}
 \Ra_{2}:\left \{ \begin{array}{l}
\dot{t}_1=t_3-t_1t_2\\
\dot{t}_2=2t_1^2-\frac{1}{2}t_2^2\\
\dot{t}_3=-2t_2t_3+8t_1^3\\
\dot{t}_4=-4t_2t_4
\end{array} \right.,
\end{equation}
where $\dot \ast=a\cdot q\cdot \frac{\partial
\ast}{\partial q}$ refers to a logarithmic derivation with some constant $a\in \C$. In \cite{movnik} we computed the $q$-expansion of a solution of
\eqref{eq mvf R2} for $a=-\frac{1}{5}$ by computer and we observed that at least the first 100 coefficients of the $q$-expansions coincide with the coefficients of the following functions:
\begin{equation} \label{eq solution R02}
\left \{ \begin{array}{l}
\frac{10}{6}{t}_1\big(\frac{q}{10}\big)=\frac{1}{24}\big(\theta_3^4(q^2)+\theta_2^4(q^2)\big),\\
\frac{10}{4}{t}_2\big(\frac{q}{10}\big)=\frac{1}{24}\big(E_2(q^2)+2E_2(q^4)\big),\\
10^4{t}_4\big(\frac{{q}}{10}\big)=\eta^8({q})\eta^8({q}^2),
\end{array} \right.
\end{equation}
but we did not prove theoretically that these functions form a solution of the system \eqref{eq mvf R2}.
If we apply the change of variables $\tilde{t}_1=20t_2$, $\tilde{t}_2=40t_1$ and $\tilde{t}_3=800t_3$, then the system \eqref{eq mvf R2} transforms
to the system:
\begin{equation}
 \label{eq mvf R2 4}
 \left \{ \begin{array}{l}
\tilde{t}_1'=\frac{1}{8}(\tilde{t}_1^2-\tilde{t}_2^2) \\
\tilde{t}_2'=\frac{1}{4}(\tilde{t}_1\tilde{t}_2-\tilde{t}_3)\\
\tilde{t}_3'=\frac{1}{2}(\tilde{t}_1\tilde{t}_3-\tilde{t}_2^3)
\end{array} \right. ,
\end{equation}
in which $\ast'= q \frac{\partial \ast}{\partial q}=-5\dot \ast$. Note that due to \eqref{eq t3} $t_4$ depends to $t_1$ and
$t_3$, hence we can omit it, indeed, in Theorem \ref{thm main} we consider $\Delta_2=t_4$.
If, by abuse of notation, we use again $t_1,t_2,t_3$ instead of $\tilde{t}_1,\tilde{t}_2,\tilde{t}_3$, then we get the system \eqref{eq mvf R2 3} from
\eqref{eq mvf R2 4}.

For $n=1$ we have:
\begin{equation} \label{eq mvf R1}
 \Ra_{1}:  \left \{ \begin{array}{l} \dot
t_1=-t_1t_2-9(t_1^3-t_3)
\\
\dot t_2=81t_1(t_1^3-t_3)-t_2^2
\\
\dot t_3=-3t_2t_3
\end{array} \right. ,
\end{equation}
where $\dot \ast=3\cdot q\cdot \frac{\partial \ast}{\partial q}$. We verified that at least the first $100$ coefficients of the $q$-expansions of the
following quasi-modular forms satisfy the system \eqref{eq mvf R1}:
\begin{equation} \label{eq solution R01}
\left \{ \begin{array}{l}
{t}_1(q)=\frac{1}{3}\big(2\theta_3(q^2)\theta_3(q^6)-\theta_3(-q^2)\theta_3(-q^6)\big),\\
{t}_2(q)=\frac{1}{8}\big(E_2(q^2)-9E_2(q^6)\big),\\
{t}_3(q)=\frac{\eta^9({q}^3)}{\eta^3({q})}.
\end{array} \right.
\end{equation}
Here $t_1, t_3$ are modular forms of weight $1,3$, respectively, and character $\chi_{-3}$  for $\Gamma_0(3)$, where
$\chi_{-3}(d):=\left( \frac{d}{3} \right), \ d\in \Z$ (here $\left(\frac{\cdot}{3}\right)$ refers to the Jacobi (Legendre) symbol). Note that neither 
of the eta quotients
$t_3(q)=\frac{\eta^9({q}^3)}{\eta^3({q})}$ and $27(t_1(q)^3-t_3(q))=\frac{\eta^9({q})}{\eta^3({q}^3)}$ are cusp forms for $\Gamma_0(3)$, while
$27t_3(q)(t_1(q)^3-t_3(q))=\eta^6({q})\eta^6({q}^3)=\Delta_3$ is a cusp form of weight $6$ for $\Gamma_0(6)$ (see Section \ref{section proof 2}).
By applying the change of variables $\tilde{t}_1=-2t_2-9t_1^2$, $\tilde{t}_2=9t_1^2$, $\tilde{t}_3=3t_1t_3$ and $\tilde{t}_4=t_3^2$ to the system
\eqref{eq mvf R1}, we get the system \eqref{eq mvf R1 3}, where again by abuse of notation we use $t_1,t_2,t_3,t_4$ instead of
$\tilde{t}_1,\tilde{t}_2,\tilde{t}_3, \tilde{t}_4$, and $\ast'= q \frac{\partial \ast}{\partial q}=\frac{1}{3}\dot \ast$.

It is worth to point out that the Ramanujan system also can be reencountered through GMCD. Indeed, Hossein Movasati \cite{ho14}
showed that the vector fields $\Ramvf$ satisfies
{ $
\nabla_\Ramvf\alpha=\left(
                    \begin{array}{cc}
                      0 & 1 \\
                      0 & 0 \\
                    \end{array}
                  \right) \alpha \ ,
$ }
where $\alpha$ is the transpose of the vector $(\ [\frac{dx}{y}]\ \ [\frac{xdx}{y}]\ )$ and $\nabla$ is the Gauss-Manin connection of the universal family
of the elliptic curves:
\begin{equation} \label{eq Weierstrass family}
 y^2=4(x-t_1)^3-t_2(x-t_1)-t_3 \, ,  \ \text{where} \ \ (t_1,t_2,t_3)\in\C^3  \ \text{such that} \ \  27t_3^2-t_2^3\neq 0\ .
\end{equation}
In the same work one can find that the modular discriminant $\md$ can be written in terms of the parameters of the family \eqref{eq Weierstrass family}
which is actually a constant multiple of the discriminant of this family, i.e., $27t_3^2-t_2^3$. Analogously, we find out that the modular forms
$\md_2$ and $\md_3$ are a factor of the discriminant of a modified version of the Dwork family. More precisely, if for any integer $n$ we let
$z=\frac{t_{n+2}}{t_1^{n+2}}$, then the Dwork family is equivalent to the family:
\[
 t_{n+2}x_0^{n+2}+x_1^{n+2}+x_2^{n+2}+\cdots+x_{n+1}^{n+2}-(n+2)t_1x_0
x_1x_2\cdots x_{n+1}=0,
\]
whose discriminant is $t_{n+2}(t_1^{n+2}-t_{n+2})$ (see \cite{movnik} for more details). As we saw above, for $n=1$ we get
$\md_3=27t_3(t_1^3-t_3)$, and for $n=2$ we have $\md_2=t_4$.

\section{Proof of Theorem \ref{thm main}} \label{section proof 1}
First note that, due to Remark \ref{rem basic},  $\Gamma_0(2)$  has two cusps $[0],[\infty]$ and one elliptic point of period $2$, and the genus of $X_0(2)$ is zero. Hence:
\begin{align}
  & \dim \mfs_k\big(\Gamma_0(2)\big)=\left\lfloor \frac{k}{4} \right\rfloor +1\,, \ \ {\rm provided} \ k\geq2 \ {\rm is \ even},  \label{eq dim MkG0(2)}\\
  & \dim \cfs_k\big(\Gamma_0(2)\big)=\left\lfloor \frac{k}{4} \right\rfloor -1\,, \ \ {\rm provided} \ k\geq4 \ {\rm is \ even}, \label{eq dim SkG0(2)}
\end{align}
otherwise $\mfs_k\big(\Gamma_0(2)\big)=0$ and $\cfs_k\big(\Gamma_0(2)\big)=0$. On account of Remark \ref{rem basic} {\bf (ii)} one gets that $\tto,\ttt,\ttth$
(given in \eqref{eq solution R2 3}) are (quasi-)modular forms for $\Gamma_0(2)$. Their $q$-expansions are as follows:
\begin{align}
  \label{eq q-exp t1}  \tto&= 1-8q-40q^2-32q^3-104q^4-48q^5-160q^6-64q^7- \ldots \,,  \\
  \label{eq q-exp t2}  \ttt&= 1+24q+24q^2+96q^3+24q^4+144q^5+96q^6+192q^7+\ldots  \,, \\
  \label{eq q-exp t3}  \ttth&=1-80q-400q^2-2240q^3-2960q^4-10080q^5-11200q^6-27520q^7-\ldots\,.
\end{align}
If we denote the vector space generated by functions $f_1,f_2,\ldots,f_n$ over $\C$ by $\langle f_1,f_2,\ldots,f_n\rangle$, then we claim that:
\begin{align}\label{eq generator M2}
   & \mfs_2(\Gamma_0(2))= \langle \ttt \rangle,\\
   & \mfs_4(\Gamma_0(2))= \langle \ttt^2,\ttth \rangle , \label{eq generator M4}\\
   & \mfs_6(\Gamma_0(2))= \langle \ttt^3,\ttt\ttth \rangle, \label{eq generator M6}\\
   & \mfs_8(\Gamma_0(2))= \langle \ttt^4,\ttt^2\ttth,\ttth^2 \rangle. \label{eq generator M8}
\end{align}
Indeed, due to Remark \ref{rem eta quotient} we obtain:
\begin{align}\label{eq t2}
   & \frac{\left(\eta^{24}(q^2)/ \eta^{24}(q)\right)'}{\left(\eta^{24}(q^2)/ \eta^{24}(q)\right)}=2E_2(q^2)-E_2(q)=\ttt,
\end{align}
where $\eta^{24}(q^2)/ \eta^{24}(q)$ is a modular function (of weight $0$) for $\Gamma_0(2)$.
Hence, $\ttt\in \mfs_2(\Gamma_0(2))$, and since $\dim \mfs_2(\Gamma_0(2))=1$, \eqref{eq generator M2} is proved.
It is evident that $\ttth\in \mfs_4(\Gamma_0(2))$, and since $\dim \mfs_4(\Gamma_0(2))=2$, in order to prove \eqref{eq generator M4} it is enough to
show that $\ttt^2$ and $\ttth$ are linearly independent on $\C$. To this end, due to the effectiveness of modular forms given in Corollary
\ref{cor eff}, it suffices to observe that the vectors $\left(a_{\ttt^2}(0),a_{\ttt^2}(1)\right)=(1,48)$ and
$\left(a_{\ttth}(0),a_{\ttth}(1)\right)=(1,-80)$  are linearly independent (note that $d_{\Gamma_0(2)}=3$). Analogously,  \eqref{eq generator M6}
follows from the fact that $\left(a_{\ttt^3}(0),a_{\ttt^3}(1)\right)=(1,72)$ and $\left(a_{\ttt\ttth}(0),a_{\ttt\ttth}(1)\right)=(1,-56)$ are
linearly independent, and the linearly independence of  $\left(a_{\ttt^4}(0),a_{\ttt^4}(1),a_{\ttt^4}(2)\right)=(1,96,3552)$,
$\left(a_{\ttt^2\ttth}(0),a_{\ttt^2\ttth}(1),a_{\ttt^2\ttth}(2)\right)=(1,-32,-3616)$ and
$\left(a_{\ttth^2}(0),a_{\ttth^2}(1),a_{\ttth^2}(2)\right)=(1,-160,$ $5600)$ imply \eqref{eq generator M8}.
\begin{enumerate}
  \item By Remark \ref{rem eta quotient} we know that $\Delta_2(q)=\eta^8(q)\eta^8(q^2)\in \mfs_8\big(\Gamma_0(2)\big)$,
  which does not vanish in $\H$, hence on account of the valence formula given in Theorem \ref{thm valence}
  ${\rm ord}_{[0]} \big(\Delta_2;\Gamma_0(2)\big)+{\rm ord}_{[\infty]} \big(\Delta_2;\Gamma_0(2)\big)=2$. Note that
      the $q$-expansion of $\Delta_2$ at $\infty$ is as follows:
      \[
      \Delta_2(q)={q}\prod_{k=1}^\infty (1-{q}^{k})^8(1-{q}^{2k})^8=q-8q^2+12q^3+64q^4-210q^5-96q^6+\ldots,
      \]
      thus ${\rm ord}_{[\infty]} \big(\Delta_2;\Gamma_0(2)\big)=1$, which implies
      \begin{equation} \label{eq ord_0 Delta2}
       {\rm ord}_{[0]} \big(\Delta_2;\Gamma_0(2)\big)=1.
      \end{equation}
  Therefore $\Delta_2\in \cfs_8\big(\Gamma_0(2)\big)$ and \eqref{eq eta quotient} gives:
  \[
  \frac{\Delta_2'(q)}{\Delta_2(q)}=\frac{1}{3}\big(E_2(q)+2E_2(q^2)\big)=\tto.
  \]
  To prove $\tto'=\frac{1}{8}\big(\tto^2-\ttt^2\big)$ and $\ttt'=\frac{1}{4}\big(\tto\ttt-\ttth\big)$ we first point out that:
  \begin{align}\label{}
    & \tto'-\frac{1}{8}\tto^2= \left(\frac{\Delta_2'}{\Delta_2}\right)'-\frac{1}{8}\frac{\Delta_2'^2}{\Delta_2^2}=
\frac{8\Delta_2''\Delta_2-9\Delta'^2}{8\Delta_2^2}=\frac{[\Delta_2,\Delta_2]_2}{72\Delta_2^2},\\
    & \ttt'-\frac{1}{4}\tto\ttt= t'_2-\frac{2\ttt\Delta'_2}{8\Delta_2}=\frac{8\ttt'\Delta_2-2\ttt\Delta_2'}{8\Delta_2}=\frac{[\Delta_2,\ttt]_1}{8\Delta_2},
  \end{align}
 where $[\cdot,\cdot]_1$ and $[\cdot,\cdot]_2$ are the first and second Rankin-Cohen brackets. Hence, due to \eqref{eq RCb is mf}, both
 $\tto'-\frac{1}{8}\tto^2$ and $\ttt'-\frac{1}{4}\tto\ttt$ belong to $\mfs_4\big(\Gamma_0(2)\big)$. Therefore, after comparing
 the coefficients of the $q$-expansion of $\tto'-\frac{1}{8}\tto^2$ and $\ttt'-\frac{1}{4}\tto\ttt$ with $\ttt^2$ and $\ttth$, from
 \eqref{eq generator M4} and the effectiveness of modular forms we get $\tto'-\frac{1}{8}\tto^2=-\frac{1}{8}\ttt^2$ and
 $\ttt'-\frac{1}{4}\tto\ttt=-\frac{1}{4}\ttth$. Analogously, we observe that:
 \[
   \ttth'-\frac{1}{2}\tto\ttth=\frac{[\Delta_2,\ttth]_1}{8\Delta_2}\in \mfs_6\big(\Gamma_0(2)\big),
 \]
 and \eqref{eq generator M6} implies that $\ttth'-\frac{1}{2}\tto\ttth=-\frac{1}{2}\ttt^3$, and this finishes the proof of
 $\ttth'=\frac{1}{2}\big(\tto\ttth-\ttt^3\big)$.
 Finally, using \eqref{eq generator M8} and the effectiveness of modular forms we get that:
 \begin{equation} \label{eq del=t2-t3}
  \Delta_2=\frac{1}{256}\big(\ttt^4-\ttth^2\big),
 \end{equation}
 and this completes the proof of part 1.

  \item To prove $\mfs\big(\Gamma_0(2)\big)=\C[\ttt,\ttth]$ it is enough to show that for any non-negative integer $k$, the set:
  \[
   {\mathscr{B}}_k:=\big\{\ttt^r \ttth^s\ : \ r,s\in \Z_{\geq 0} \ {\rm and} \ 2r+4s=2k \big\},
  \]
 forms a basis for $\mfs_{2k}\big(\Gamma_0(2)\big)$ (note that $\mfs_{2k+1}\big(\Gamma_0(2)\big)=0$). To this end, first note that:
 \begin{align*}
  \ttt^r \ttth^s\in \mathscr{B}_k &\Longleftrightarrow r,s\in \Z_{\geq 0} \ {\rm and} \ 2r+4s=2k ,\\
   &\Longleftrightarrow   r=k-2s \ {\rm and} \ 0\leq s \leq \frac{k}{2}, \ s\in  \Z_{\geq 0},
 \end{align*}
which implies:
  \[
    \# \mathscr{B}_k=\left\lfloor \frac{k}{2} \right\rfloor+1=\dim \mfs_{2k}\big(\Gamma_0(2)\big).
  \]
Hence, it remains to show that the elements of $\mathscr{B}_k$ are linearly independent. We claim that:
\begin{equation} \label{eq different zeros}
 \exists\, P_1,P_2\in \H\cup \{0\} \ {\rm with} \ P_1\neq P_2 \ {\rm such \ that} \ \ttt(P_1)=\ttth(P_2)=0,
\end{equation}
which implies for any two pairs $(r_1,s_1),(r_2,s_2)\in \Z_{\geq 0}\times \Z_{\geq 0}$ with $(r_1,s_1)\neq(r_2,s_2)$, the modular forms
$\ttt^{r_1} \ttth^{s_1}$ and $\ttt^{r_2} \ttth^{s_2}$ have zeros of different order in $\H\cup \{0\}$, and hence they are linearly independent
on $\C$. Then, $\mathscr{B}_k$ is a set of linearly independent elements, and in particular, we proved that
$\ttt$ and $\ttth$ are algebraically independent over $\C$. Thus we have proved that $\mfs\big(\Gamma_0(2)\big)=\C[\ttt,\ttth]$. Now, since $\tto$
is a quasi-modular form (which is not a modular form), Remark \ref{rem qmfs=mfs[lambda]} implies that $\qmfs\big(\Gamma_0(2)\big)=\mfs\big(\Gamma_0(2)\big)
[\tto]=\C[\tto,\ttt,\ttth]$, and this finishes the proof of part 2.

To prove \eqref{eq different zeros}, first note that on account
of \eqref{eq q-exp t2} and \eqref{eq q-exp t3} we have ${\rm ord}_{[\infty]} \big(\ttt;\Gamma_0(2)\big)={\rm ord}_{[\infty]} \big(\ttth;\Gamma_0(2)\big)=0$,
and hence the valence formula gives:
\begin{align}
 &\sum_{\tau\in\Gamma_0(2)\backslash \H} \frac{{\rm ord}_\tau (\ttt)}{e_{\Gamma_0(2)}(\tau)}+{\rm ord}_{[0]} \big(\ttt;\Gamma_0(2)\big)
  =\frac{1}{2}, \label{eq t2 valence}\\
 &\sum_{\tau\in\Gamma_0(2)\backslash \H} \frac{{\rm ord}_\tau (\ttth)}{e_{\Gamma_0(2)}(\tau)}+{\rm ord}_{[0]} \big(\ttth;\Gamma_0(2)\big)
  =1, \label{eq t3 valence}
\end{align}
which imply that there are $P_1,P_2\in\H\cup\{0\}$ such that
$\ttt(P_1)=\ttth(P_2)=0$. If $P_1,P_2\in\H$, then due to \eqref{eq del=t2-t3} and the fact that $\Delta_2$ is non-zero in $\H$
we get that $P_1$ and $P_2$ must be distinct. If, by contradiction, we suppose that $\ttt$ and $\ttth$ do not have
distinct zeros, then the only possibility for $P_1$ and $P_2$ is $P_1=P_2=0$. Thus, \eqref{eq t2 valence} and \eqref{eq t3 valence} imply
that ${\rm ord}_{[0]} \big(\ttt;\Gamma_0(2)\big)=\frac{1}{2}$ and ${\rm ord}_{[0]} \big(\ttth;\Gamma_0(2)\big)=1$, hence
${\rm ord}_{[0]} \big(\ttt^4;\Gamma_0(2)\big)={\rm ord}_{[0]} \big(\ttth^2;\Gamma_0(2)\big)=2$, and therefore
${\rm ord}_{[0]} \big(\Delta_2;\Gamma_0(2)\big)={\rm ord}_{[0]} \big(\frac{1}{256}\big(\ttt^4-\ttth^2\big);\Gamma_0(2)\big)\geq 2$, which contradicts
\eqref{eq ord_0 Delta2}.

  \item Recall that the special linear Lie algebra $\sl2$ is the  Lie algebra of $2\times 2$ matrices with trace zero.
Three matrices
\begin{equation}\label{eq sbsl2}
e:=\left(
        \begin{array}{cc}
          0 & 1 \\
          0 & 0 \\
        \end{array}
      \right)\ \ , \ \ \ \
f:=\left(
        \begin{array}{cc}
          0 & 0 \\
          1 & 0 \\
        \end{array}
      \right)\ \ , \ \ \ \
h:=\left(
        \begin{array}{cc}
          1 & 0 \\
          0 & -1 \\
        \end{array}
       \right)\ ,
\end{equation}
form the standard basis of $\sl2$ with the commutators:
\begin{equation}\label{eq lbsl2}
[e,f]=h \ \ , \ \ \ \ [h,e]=2e \ \ , \ \ \ \ [h,f]=-2f\, .
\end{equation}
We also recall that if we have two vector fields $V=\sum_{j=1}^{\dt}V^j\frac{\partial}{\partial t_j}$ and $W=\sum_{j=1}^{\dt}W^j\frac{\partial}{\partial t_j}$, then
\begin{equation}\label{eq lie bracket in a chart}
[V,W]=VW-WV=\sum_{j=1}^{\dt}\left( V(W^j)-W(V^j) \right) \frac{\partial}{\partial t_j}\,.
\end{equation}
 Using \eqref{eq lie bracket in a chart} one can easily check that:
 \[
  [\Ra_2,\cvf]=\rvf \ , \ \ [\rvf,\Ra_2]=2\Ra_2 \ , \ \ [\rvf,\cvf]=-2\cvf\,.
 \]
Hence, the correspondences $\Ra_2\mapsto e$, $\rvf\mapsto h$ and $\cvf\mapsto f$ show that the Lie algebra generated by $\Ra_2, \ \rvf$ and $\cvf$
is isomorphic to $\mathfrak{sl}_2(\C)$, and this completes the proof of part 3.

\end{enumerate}

\section{Proof of Theorem \ref{thm main2}} \label{section proof 2}
The proof  is analogous to the proof of Theorem \ref{thm main}. Using Remark \ref{rem basic} we get:
\begin{align}
  & \dim \mfs_k\big(\Gamma_0(3)\big)=\left\lfloor \frac{k}{3} \right\rfloor +1\,, \ \ {\rm provided} \ k\geq2 \ {\rm is \ even},  \label{eq dim MkG0(3)}\\
  & \dim \cfs_k\big(\Gamma_0(3)\big)=\left\lfloor \frac{k}{3} \right\rfloor -1\,, \ \ {\rm provided} \ k\geq4 \ {\rm is \ even}, \label{eq dim SkG0(3)}
\end{align}
otherwise $\mfs_k\big(\Gamma_0(3)\big)=0$ and $\cfs_k\big(\Gamma_0(3)\big)=0$. The $q$-expansions of $\too, \ \tot, \ \toth$ and $\tof$ are as follows:
\begin{align}
  \label{eq q-exp t11}  \too&= 1-6q-18q^2-42q^3-42q^4-36q^5-126q^6-48q^7-90q^8-150q^9- \ldots,   \\
  \label{eq q-exp t12}  \tot&= 1+12q+36q^2+12q^3+84q^4+72q^5+36q^6+96q^7+180q^8+12q^9+\ldots ,  \\
  \label{eq q-exp t13}  \toth&=q+9q^2+27q^3+73q^4+126q^5+243q^6+344q^7+585q^8+729q^9+\ldots , \\
  \label{eq q-exp t14}  \tof&=q^2+6q^3+27q^4+80q^5+207q^6+432q^7+863q^8+1512q^9+\ldots ,
\end{align}
and $\too\in\qmfs_2\big(\Gamma_0(3)\big)\setminus \mfs_2\big(\Gamma_0(3)\big)$, $\tot\in\mfs_2\big(\Gamma_0(3)\big)$, $\toth\in\mfs_4\big(\Gamma_0(3)\big)$ and
$\tof\in\mfs_6\big(\Gamma_0(3)\big)$. Note that:
\begin{align}\label{eq t2}
   & \frac{\big(\eta^{24}(q^3)/ \eta^{24}(q)\big)'}{\big(\eta^{24}(q^3)/ \eta^{24}(q)\big)}=3E_2(q^3)-E_2(q)=2\tot,
\end{align}
where $\eta^{24}(q^3)/ \eta^{24}(q)$ is a modular function (of weight $0$) for $\Gamma_0(3)$.
Similarly to \eqref{eq generator M2}-\eqref{eq generator M8} we observe that:
\begin{align}\label{eq generator M23}
   & \mfs_2\big(\Gamma_0(2)\big)= \langle \tot \rangle,\\
   & \mfs_4\big(\Gamma_0(2)\big)= \langle \tot^2,\toth \rangle , \label{eq generator M43}\\
   & \mfs_6\big(\Gamma_0(2)\big)= \langle \tot^3,\tot\toth,\tof \rangle, \label{eq generator M63}\\
   & \mfs_8\big(\Gamma_0(2)\big)= \langle \tot^4,\tot^2\toth,\toth^2 \rangle. \label{eq generator M83}
\end{align}

\begin{enumerate}
  \item We first verify that $\Delta_3(q)=\eta^6(q)\eta^6(q^3)\in \mfs_6\big(\Gamma_0(3)\big)$
  does not vanish in $\H$, furthermore that  ${\rm ord}_{[0]} \big(\Delta_3;\Gamma_0(3)\big)+{\rm ord}_{[\infty]} \big(\Delta_3;\Gamma_0(3)\big)=2$ and
      the $q$-expansion of $\Delta_3$ at $\infty$ is as follows:
      \[
      \Delta_3(q)={q}\prod_{k=1}^\infty (1-{q}^{k})^6(1-{q}^{3k})^6=q-6q^2+9q^3+4q^4+6q^5-54q^6-40q^7+\ldots.
      \]
  Using these facts, we can complete the proof in the same way as the proof part 1 of Theorem \ref{thm main}.

  \item By \eqref{eq q-exp t12}, \eqref{eq q-exp t13} and \eqref{eq q-exp t14} we have ${\rm ord}_{[\infty]} \big(\tot;\Gamma_0(3)\big)=0$,
  ${\rm ord}_{[\infty]} \big(\toth;\Gamma_0(3)\big)=1$
  and ${\rm ord}_{[\infty]} \big(\tof;\Gamma_0(3)\big)=2$. We fix a positive even integer $k$. Let $r_2,r_3,r_4$ and $s_2,s_3,s_4$ be non-negative integers
  such that $2r_2+4r_3+6r_4=k$ and $2s_2+4s_3+6s_4=k$ and set $f_1:=\tot^{r_2}\toth^{r_3}\tof^{r_4}$ and $f_2:=\tot^{s_2}\toth^{s_3}\tof^{s_4}$.
  It is evident that $f_1,f_2\in \mfs_k\big(\Gamma_0(3)\big)$, ${\rm ord}_{[\infty]} \big(f_1;\Gamma_0(3)\big)=r_3+2r_4$ and ${\rm ord}_{[\infty]}
  \big(f_2;\Gamma_0(3)\big)=s_3+2s_4$. We have the following two cases:
  \begin{description}
   \item {\bf (i)} if $r_3+2r_4\neq s_3+2s_4$, then $f_1$ and $f_2$ are linearly independent in $\mfs_k(\Gamma_0(3))$.
   \item {\bf (ii)} if $r_3+2r_4 = s_3+2s_4$, then $2r_2+r_3=2s_2+s_3$. Without loos of generality, suppose that $r_3\leq s_3$ (and hence
   $s_2\leq r_2$ and $s_4\leq r_4$). Therefore, the equation $\tot\tof=\toth^{2}$ implies:
   \[
    f_1-f_2=\tot^{s_2}\toth^{r_3}\tof^{s_4}\big( \tot^{r_2-s_2}\tof^{r_4-s_4}-\toth^{s_3-r_3} \big)=0 \Longrightarrow f_1=f_2,
   \]
   or equivalently $f_1+\mathscr{I}=f_2+\mathscr{I}$.
  \end{description}
 For any integer $j$ satisfying $0\leq j \leq \left\lfloor \frac{k}{3} \right\rfloor$ we define:
 \[
  \mathscr{B}_{k,j}:=\left\{\tot^{a}\toth^{b}\tof^{c}\ | \ a,b,c\in \Z_{\geq 0}, \ 2a+4b+6c=k \ {\rm and} \ b+2c=j \right\}.
 \]
 Observe that $\mathscr{B}_{k,j}$ is non-empty and {\bf (ii)} implies that for any $g_1,g_2\in \mathscr{B}_{k,j}$, $g_1=g_2$, i.e., $\#\mathscr{B}_{k,j}=1$.
 If $0\leq j_1,j_2 \leq \left\lfloor \frac{k}{3} \right\rfloor$ and $j_1\neq j_2$, then {\bf (i)} implies that elements of $\mathscr{B}_{k,j_1}$ and
 $\mathscr{B}_{k,j_2}$ are linearly independent. Hence,
 $$\mathscr{B}_k:=\bigcup_{j=0}^{\left\lfloor \frac{k}{3} \right\rfloor}\mathscr{B}_{k,j}\subset  \mfs_k(\Gamma_0(3))$$
 is a linearly independent subset and $\#\mathscr{B}_k=\left\lfloor \frac{k}{3} \right\rfloor+1$. On account of \eqref{eq dim MkG0(3)} we deduce that
 $\mathscr{B}_k$ forms a basis for $\mfs_k(\Gamma_0(3))$, and this completes
 the proof of part 2.
  \item The proof of this part is also analogous to the proof of part 3 of Theorem \ref{thm main}.
\end{enumerate}

\section{More analogies} \label{section ma}

The $\mf$-function ($\mf$-invariant) is an important tool in the classification of elliptic curves, and is defined as the following (weight $0$) modular function for $\SL2$:
\[
{\mf}(q):=\frac{E_4^3(q)}{\Delta}=\frac{1}{q}+744+196884q+21493760q^2+864299970q^3+20245856256q^4+\ldots \,.
\]
In this way, we find similarly the following modular functions for $\Gamma_0(2)$ and $\Gamma_0(3)$, respectively:
\begin{align*}
&\mf_2=\frac{\ttt^4}{\md_2}=\frac{1}{q}+104+4372q+96256q^2+1240002q^3+10698752q^4+74428120q^5+\ldots\,,\\
&\mf_3=\frac{\tot^3}{\md_3}=\frac{1}{q}+42+783q+8672q^2+65367q^3+371520q^4+1741655q^5+\ldots\,.
\end{align*}

If we eliminate the variables $t_2$ and $t_3$ in the Ramanujan system \eqref{eq ramanujan}, then we get the Chazy equation:
\[
  2y'''-2yy''+3\big(y'\big)^2=0,
\]
which is satisfied by $E_2$.
Analogously, by  eliminating the variables $t_2$ and $t_3$ from the system \eqref{eq mvf R2 3} we obtain the differential equation:
       \begin{equation} \label{eq chazy-type}
        y'''\big(16y'-2y^2\big)-y''\big(8y''+12yy'-2y^3\big)+\big(y'\big)^2\big(20y'-3y^2\big)=0,
       \end{equation}
which is satisfied by the quasi-modular form $\tto$. We will call \eqref{eq chazy-type} a \emph{Chazy-type differential equation} for $\tto$.
The author didn't do the computations for the system \eqref{eq mvf R1 3}, but he believes that with a little more effort one can get the Cahazy-typ
equation in this case as well. Note that in the above differential equations we are substituting $t_1$ by the free parameter $y$.

In Section \ref{section pbf} we observed that the derivative of a modular form is not necessarily a modular form. However, in the case of full 
modular forms we have the Ramanujan-Serre derivation which preserves the modularity. More precisely, if 
$f\in \mfs_k\big( \SL2\big)\subset\C[E_4,E_6]$, then its
Ramanujan-Serre derivative is defined as follows which is a modular form of weight $k+2$:
\begin{equation}
\partial f:=f'-\frac{k}{12}E_2f=-\frac{1}{3}E_6\frac{\partial f}{\partial E_4}-\frac{1}{2}E_4^2\frac{\partial f}{\partial E_6}\in \mfs_{k+2}\big( \SL2\big).
\end{equation}
Similarly we get the Ramanujan-Serre-type derivations $\partial_2$ and $\partial_3$ for $\Gamma_0(2)$ and $\Gamma_0(3)$, respectively, 
which preserve the modularity. In fact, if $f\in \mfs_k\big( \Gamma_0(2) \big)\subset \C[\ttt,\ttth]$ or $f\in \mfs_k\big( \Gamma_0(3) \big)
\subset \C[\tot,\toth,\tof]/\tilde{\mathscr{J}}$, then we define 
$\partial_2f$ or $\partial_3f$, respectively, as follows:
\begin{align}
  &\partial_2 f:=f'-\frac{k}{8}\tto f=-\frac{1}{4}\ttth \frac{\partial f}{\partial \ttt}-\frac{1}{2}\ttt^3\frac{\partial f}{\partial \ttth}\in \mfs_{k+2}\big( \Gamma_0(2)\big), \\
  & \partial_3 f:=f'-\frac{k}{6}\too f=\frac{1}{3}\big( -\tot^2+54\toth \big)\frac{\partial f}{\partial \tot}+\big( \frac{1}{3}\tot\toth +9\tof \big) \frac{\partial f}{\partial \toth}\\
  &\qquad\qquad\qquad\qquad\qquad\qquad\qquad\qquad\qquad\, +\tot\tof \frac{\partial f}{\partial \tof}\in \mfs_{k+2}\big( \Gamma_0(3)\big). \nonumber
\end{align}

All quasi-modular forms introduced in this paper have integer Fourier coefficients, and some of them are listed in the On-line Encyclopedia
of Integer Sequences \cite{oeis}. In the first and the second row of the following table we give, respectively, the modular forms and the corresponding reference number in \cite{oeis}. One can find more information about these modular forms in the referred webpage and references
therein.
{
\begin{equation} \nonumber
\label{}
\begin{array}{|c|c|c|c|c|c|c|c|}
\hline
\ttt & \md_2 & \mf_2& \tot & \toth &  \tof & \md_3 &\mf_3 \\
\hline
 A004011 & A002288 & A007267 & A008653 & A198956 & A198958 & A007332 & A030197\\
\hline
\end{array}
\end{equation}
}

\section{Applications} \label{section appl}

In this section using the Fourier coefficients of $\tto,\too,\ttt,\tot,\ttth,\toth,\md_2,\md_3$ and the relations between them we find some interesting
and non-trivial formulas and congruences. For any non-negative integers $k$ and $n$ we define:
\[
 \delta_k^n:=\left \{
\begin{array}{l}
1,\,\, \quad  \textrm{\rm if} \  n\equiv 0 \ ({\rm mod} \ k);
\\
0,\,\, \quad  \textrm{\rm if} \  n\not\equiv 0 \ ({\rm mod} \ k).
\end{array} \right.
\]
In the rest of this article we denote the divisor (sigma) function by $\sigma(k):=\sigma_1(k)=\sum_{d|k}d, \ k\in \Z_{>0}$. We also define $\sigma(0):=-\frac{1}{24}$, hence we can write:
\begin{equation}
E_2(q)=-24\sum_{n=0}^{\infty} \sigma(n)q^n\,,
\end{equation}
from which we get:
\begin{align}
 \tto&=\sum_{n=0}^\infty -8\bigg(\sigma(n)+2\delta_2^n\sigma\left(\frac{n}{2}\right)\bigg)q^n\,, \label{eq P_2}\\
 \too&=\sum_{n=0}^\infty -6\bigg(\sigma(n)+3\delta_3^n\sigma\left(\frac{n}{3}\right)\bigg)q^n\,. \label{eq P_3}
\end{align}

\subsection{A recurrence relation for $\sigma$} \label{}

Using the first equation of \eqref{eq mvf R2 3} we get the following non-trivial recurrence formula for sigma function.
Similarly one can obtain other relations from other equations of the systems \eqref{eq mvf R2 3}
and \eqref{eq mvf R1 3}.

\begin{prop} \label{prop 1}
For any integer $k\geq 1$ we have:
\begin{align}
  & \sigma(2k)=\frac{8}{2k-1}\sum_{j=1}^{k-1}\sigma(j)\big( 2\sigma(2k-j)  +4\sigma(k-j)-5\sigma(2k-2j)  \big) \label{eq sigma(2k)}\\
  &\qquad\quad -\frac{4k+1}{2k-1}\sigma(k)+\frac{8}{2k-1}\big(\sigma(k)\big)^2,\nonumber \\
  & \sigma(2k+1)=\frac{1}{k}\sum_{j=1}^{k}\sigma(j)\big( 8\sigma(2k+1-j)-20\sigma(2k+1-2j) \big). \label{eq sigma(2k+1)}
\end{align}
\end{prop}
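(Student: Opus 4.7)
The plan is to substitute the particular solution $t_1=\tto$, $t_2=\ttt$ into the first coordinate of \eqref{eq mvf R2 3}, namely $\tto' = \tfrac{1}{8}(\tto^2-\ttt^2)$, and compare Fourier coefficients of $q^n$ on both sides. Writing $\tto=\sum a_n q^n$ and $\ttt=\sum b_n q^n$, formula \eqref{eq P_2} together with $\ttt = 2E_2(q^2)-E_2(q)$ gives $a_0=b_0=1$ and, for $n\geq 1$,
\[
a_n = -8\bigl(\sigma(n)+2\delta_2^n\sigma(n/2)\bigr), \qquad b_n = 24\bigl(\sigma(n)-2\delta_2^n\sigma(n/2)\bigr).
\]
Reading off $q^n$ in $8\tto' = \tto^2-\ttt^2$ and extracting the two boundary terms of the convolution yields the master relation
\[
8na_n-2(a_n-b_n) \;=\; \sum_{j=1}^{n-1}\bigl(a_j a_{n-j}-b_j b_{n-j}\bigr).
\]

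For $n=2k+1$ odd, both $a_{2k+1}$ and $b_{2k+1}$ are simple multiples of $\sigma(2k+1)$, and a direct computation shows that the left-hand side reduces to $-128k\,\sigma(2k+1)$. On the right, in every pair $(j,n-j)$ exactly one index is even, so inserting the explicit expressions for $a_j,b_j$ and using the symmetry $j\leftrightarrow n-j$ to identify the even-index and odd-index halves of the sum produces
\[
-1024\sum_{l=1}^{k}\sigma(2l)\sigma(2k+1-2l)\;+\;2560\sum_{l=1}^{k}\sigma(l)\sigma(2k+1-2l).
\]
Dividing by $-128$ and applying the identity $\sum_{l=1}^{k}\sigma(2l)\sigma(2k+1-2l)=\sum_{j=1}^{k}\sigma(j)\sigma(2k+1-j)$ (itself a consequence of $j\leftrightarrow 2k+1-j$, which switches parity while mapping $\{1,\dots,k\}$ bijectively onto $\{k+1,\dots,2k\}$) delivers \eqref{eq sigma(2k+1)}.

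For $n=2k$ the same strategy applies, but now $a_{2k}$ and $b_{2k}$ each carry both a $\sigma(2k)$ and a $\sigma(k)$ contribution, so the left-hand side becomes $-64(2k-1)\sigma(2k)-64(4k+1)\sigma(k)$. On the right, splitting by the parity of $j$ and writing out the four cases of $a_j a_{n-j}-b_j b_{n-j}$ yields an odd-index sum of the form $-512\sum_{j\text{ odd}}\sigma(j)\sigma(2k-j)$ together with four even-index convolution sums with coefficients $-512,-2048,1280,1280$. Folding the odd-index piece by $j\leftrightarrow 2k-j$, extracting the middle term $j=k$ as the quadratic $\sigma(k)^2$, and applying the change of variable $l=k-j$ to identify $\sum_{l=1}^{k-1}\sigma(2l)\sigma(k-l)$ with $\sum_{j=1}^{k-1}\sigma(j)\sigma(2k-2j)$, the three convolution types $\sigma(j)\sigma(2k-j)$, $\sigma(j)\sigma(k-j)$, $\sigma(j)\sigma(2k-2j)$ emerge over $j=1,\ldots,k-1$ with coefficients $16$, $32$, $-40$ respectively, together with a leftover $+8\sigma(k)^2$. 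Dividing through by $-64(2k-1)$ gives \eqref{eq sigma(2k)}. The main obstacle is purely combinatorial: tracking multiplicities through the even/odd separation and the $j\leftrightarrow n-j$ folding, and isolating the middle term $j=k$ in the even case without double-counting; a numerical check at $k=1,2,3$ pins down the constants and makes the rest mechanical.
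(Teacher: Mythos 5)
Your proposal is correct and follows exactly the paper's route: the paper's proof consists of substituting the particular solution into the first equation $\tto'=\tfrac18(\tto^2-\ttt^2)$ of $\Ra_2$ and comparing Fourier coefficients, which is precisely what you do (you simply carry out in detail the even/odd bookkeeping that the paper leaves implicit). The constants you report ($-128k\sigma(2k+1)$, $-64(2k-1)\sigma(2k)-64(4k+1)\sigma(k)$, and the coefficients $16,32,-40$ with the leftover $8\sigma(k)^2$) all check out.
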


\begin{proof}
  On account of the first equation of the system \eqref{eq mvf R2 3} we have:
  \[
  \tto'=\frac{1}{8} \big( \tto^2-\ttt^2 \big).
  \]
  After comparing the Fourier coefficients in both sides of this equality we obtain the desired recurrences.
\end{proof}
Using \eqref{eq sigma(2k)} and \eqref{eq sigma(2k+1)}, for any integer $k\geq 1$, we find the following congruences:
\begin{align}
 &(2k-1)\sigma(2k)\equiv (4k+7)\sigma(k) \ ({\rm mod} \ 8)\,, \ \ \sigma(2k)\equiv \sigma(k) \ ({\rm mod} \ 2)\,,\\
 &k\sigma(2k+1)\equiv 0 \ ({\rm mod} \ 4)\, .
\end{align}

\subsection{Ramanujan tau function} \label{subsection tau}

The Ramanujan tau function, namely $\tau(n), \ n\in \N$, is defined as the Fourier coefficients of the modular discriminant $\Delta$, i.e.:
\begin{equation}
 \Delta=\frac{E_4^3-E_6^2}{1728}=\eta^{24}(q)=\sum_{n=1}^\infty \tau(n)q^n\,.
\end{equation}
A vast number of works have been dedicated to this function, see for instance \cite{beon,bcot} and references therein. Here we also give some properties
for $\tau(n)$ and then find analogous properties for Fourier coefficients of $\Delta_2$ and $\Delta_3$, namely $\tau_2$ and
$\tau_3$.

The equation $\Delta'=\Delta E_2$ yields the relation:
\begin{equation}
 \tau(n)=-\frac{24}{n-1}\sum_{j=1}^{n-1} \tau(j) \sigma(n-j)\,, \ \ n\geq 2\,,
\end{equation}
which implies:
\begin{equation}
 (n-1)\tau(n)\equiv 0 \ ({\rm mod} \ 24)\,, \ \ n\geq 2 \, ,
\end{equation}
and in particular:
\begin{equation}
 \tau(n)\equiv 0 \ ({\rm mod} \ 24)\,, \ {\rm if} \  {\rm g.c.d}(n-1,6)=1\,.
\end{equation}

Using the Ramanujan system one can check that the following differential equation holds:
\begin{equation} \label{eq E2(5)}
 4E_2^{(5)}-10E_2E_2^{(4)}+100E_2'E_2'''-100(E_2'')^2=144\Delta.
\end{equation}
Alternately, one can obtain the above equation from the fact that $[E_2,E_2]_4-4E_2^{(5)}\in \cfs_{12}\big(\SL2\big)$ (see \cite[Proposition 5.3.27]{cs17}),
and that $\cfs_{12}\big(\SL2\big)$ is $1$-dimensional and generated by $\md$. Then, by comparing the Fourier coefficients of the equation \eqref{eq E2(5)}
we obtain:
\begin{equation}
 \tau(n)=40\sum_{j=1}^{n-1}\Big( \big(-10n^2j^2+30nj^3-21j^4\big)\sigma(j)\sigma(n-j) \Big) -\frac{1}{3} n^4(2n-5)\sigma(n)\,.
\end{equation}
This relation implies:
\begin{align}
 \tau(n)&\equiv n^4\sigma(n)\equiv n\sigma(n) \ ({\rm mod} \ 2)\,, \\
 \tau(n)&\equiv n^5\sigma(n) \equiv n\sigma(n) \ ({\rm mod} \ 5)\, ,
\end{align}
and in particular implies the following known congruence relations of Ramanujan:
\begin{align*}
 \tau(2k)&\equiv 0 \ ({\rm mod} \ 2)\, \ \& \ \, \tau(3k)\equiv 0 \ ({\rm mod} \ 3)\, \ \& \ \, \tau(5k)\equiv 0 \ ({\rm mod} \ 5)\, .
\end{align*}

\subsection{Ramanujan-type tau function $\tau_2$}

We define the Ramanujan-type tau function $\tau_2$ to be the Fourier coefficients of $\md_2$, i.e.:
\begin{equation}
 \Delta_2(q)=\sum_{n=0}^\infty \tau_2(n)q^n=\eta^8(q)\eta^8(q^2)\in \cfs_8\big(\Gamma_0(2)\big)\,.
\end{equation}

Using the equations $\Delta_2'=\Delta_2\tto$ and \eqref{eq P_2}, we obtain the following recursion formula:
\begin{align}
 &\tau_2(1)=1\,, \nonumber \\
 &\tau_2(n)=-\frac{8}{n-1}\sum_{j=1}^{n-1}\tau_2(j) \bigg(\sigma(n-j)+2\delta_2^{n-j}\sigma\Big(\frac{n-j}{2}\Big)\bigg)
 \,, \ \ n\geq 2\,,
\end{align}
which implies:
\begin{equation}
 (n-1)\tau_2(n)\equiv 0 \ ({\rm mod} \ 8)\,, \ \ n\geq 2 \, .
\end{equation}
In particular, since ${\rm g.c.d}(2k-1,8)=1$, for any integer $k\geq 1$, we get:
\begin{equation}\label{eq tau2mod8=0}
 \tau_2(2k)\equiv 0 \ ({\rm mod} \ 8)\, .
\end{equation}

Using the system \eqref{eq mvf R2 3} we obtain that $\tto$ and $\md_2$ satisfy the following differential equation:
\begin{equation}
 -6\tto\tto''+9(\tto')^2+4\tto'''=16\Delta_2,
\end{equation}
which also can be deduced from $4[\frac{1}{8}\tto,\frac{1}{8}\tto]_2-2\big(\frac{1}{8}\tto\big)'''\in \cfs_{8}\big(\Gamma_0(2)\big)$ (see \cite[Theorem 2.1]{younes4}),
and that $\cfs_{8}\big(\Gamma_0(2)\big)$ is $1$-dimensional and is generated by $\md_2$. Using this equation we find $\tau_2(n), \ n\geq 2$, as follows:
\begin{align}
 \tau_2(n)&=12\sum_{j=1}^{n-1} \left\{ \Big(3nj-5j^2 \Big) \left(\sigma(j)+2\delta_2^{j}\sigma\Big(\frac{j}{2}\Big)  \right) 
 \left( \sigma(n-j)+2\delta_2^{n-j}\sigma\Big(\frac{n-j}{2}\Big) \right)
  \right\} \\
     &\ +\Big(3n^2-2n^3\Big)\bigg(\sigma(n)+2\delta_2^{n}\sigma\left(\frac{n}{2}\right)\bigg)\,. \nonumber
\end{align}
In particular we obtain:
\begin{align}
 \tau_2(n)&\equiv  n^2 \sigma(n)\equiv  n \sigma(n) \ ({\rm mod} \ 2)\,, \\
 \tau_2(n)&\equiv n^3\bigg(\sigma(n)+2\delta_2^{n}\sigma\left(\frac{n}{2}\right)\bigg)\equiv n\bigg(\sigma(n)+2\delta_2^{n}\sigma\left(\frac{n}{2}\right)\bigg) \ ({\rm mod} \ 3)\, .
\end{align}
which imply:
\begin{align}
 \tau_2(2k)&\equiv 0 \ ({\rm mod} \ 2)\,, \ \ k\geq 1\,, \\
 \tau_2(3k)&\equiv 0 \ ({\rm mod} \ 3)\,, \ \ k\geq 1\, . \label{eq tau2mod3=0}
\end{align}
Equations  \eqref{eq tau2mod8=0} and \eqref{eq tau2mod3=0} yield:
\begin{equation}\label{eq tau2mod24=0}
 \tau_2(6k)\equiv 0 \ ({\rm mod} \ 24)\,, \ \ k\geq 1 \, .
\end{equation}

\subsection{Ramanujan-type tau function $\tau_3$}
We consider the Ramanujan-type tau function $\tau_3$ as Fourier coefficients of $\md_3$, i.e.:
\begin{equation}
 \Delta_3(q)=\sum_{n=0}^\infty \tau_3(n)q^n=\eta^6(q)\eta^6(q^3)\in \cfs_6(\Gamma_0(3))\,.
\end{equation}

Using the equation $\Delta_3'=\Delta_3\too$ and \eqref{eq P_3}, we get the following recursion formula:
\begin{align}
 &\tau_3(1)=1\,,\nonumber \\
 &\tau_3(n)=-\frac{6}{n-1}\sum_{j=1}^{n-1} \tau_3(j) \left(\sigma(n-j)+3\delta_3^{n-j}\sigma\Big(\frac{n-j}{3}\Big)\right)
 \,, \ \ n\geq 2\,,
\end{align}
which implies:
\begin{equation}
 (n-1)\tau_3(n)\equiv 0 \ ({\rm mod} \ 6)\,, \ \ n\geq 2 \, .
\end{equation}
In particular, if ${\rm g.c.d}(n-1,6)=1$, then:
\begin{equation}\label{eq tau3mod6=0}
 \tau_3(n)\equiv 0 \ ({\rm mod} \ 6)\,, \ \ n\geq 2 \, .
\end{equation}

Using the system \eqref{eq mvf R1 3} we can verify that $\too$, $\tot$ and $\md_3$ satisfy the following differential equation:
\begin{equation} \label{eq 3order de P_3}
 \too'''-2\too\too''+3(\too')^2=6\tot\Delta_3,
\end{equation}
Note that similarly to $\tau_2$, this equation can also be checked from $4[\frac{1}{6}\too,\frac{1}{6}\too]_2-2\big(\frac{1}{6}\too\big)'''\in \cfs_{8}\big(\Gamma_0(3)\big)$
(see \cite[Theorem 2.1]{younes4}),
and that $\cfs_{8}\big(\Gamma_0(3)\big)$ is $1$-dimensional and is generated by $\tot\md_3$.
After computing the $q$-expansion of $\tot$ as follows:
\begin{align}
 \tot&=\sum_{n=0}^\infty 12\bigg( \sigma(n)-3\delta_3^n\sigma\left(\frac{n}{3}\right) \bigg)q^n\,.
\end{align}
and using \eqref{eq 3order de P_3} we find $\tau_3(n), \ n\geq 2$, recursively as follows:
\begin{align}
 \tau_3(n)&=\sum_{j=1}^{n-1} \left\{ 6\left(3nj-5j^2\right) \left(\sigma(j)+3\delta_3^j\sigma\Big(\frac{j}{3}\Big)  \right)
 \left(\sigma(n-j)+3\delta_3^{n-j}\sigma\Big(\frac{n-j}{3}\Big) \right)   \right.\\
  &\qquad \ \ \ \left. -12 \tau_3(j) \left(\sigma(n-j)-3\delta_3^{n-j}\sigma\Big(\frac{n-j}{3}\Big) \right) \right\} \nonumber \\
     &-n^2\big(n-2\big)\left(\sigma(n)+3\delta_3^n\sigma\left(\frac{n}{3}\right)\right)\,. \nonumber
\end{align}
Hence we obtain:
\begin{align}
 \tau_3(n)&\equiv  5n^2\left(n-2\right)\left(\sigma(n)+3\delta_3^n\sigma\left(\frac{n}{3}\right)\right)  \ ({\rm mod} \ 6) \,, \\
 \tau_3(n)&\equiv n^3\left(\sigma(n)+\delta_3^n\sigma\left(\frac{n}{3}\right)\right)\equiv n\left(\sigma(n)+\delta_3^n\sigma\left(\frac{n}{3}\right)\right) \ ({\rm mod} \ 2)\, ,\\
 \tau_3(n)&\equiv 2n^2\left(n-2\right)\sigma(n)\equiv 2n\left(n+1\right)\sigma(n) \ ({\rm mod} \ 3)\, .
\end{align}
In particular we obtain:
\begin{align}
 \tau_3(6k)&\equiv 0 \ ({\rm mod} \ 6)\,, \ \ k\geq 1\,, \\
 \tau_3(2k)&\equiv 0 \ ({\rm mod} \ 2)\,, \ \ k\geq 1\,, \\
 \tau_3(3k)&\equiv 0 \ ({\rm mod} \ 3)\,, \ \ k\geq 1\, . \label{eq tau3mod3=0}
\end{align}




{
\def\cprime{$'$} \def\cprime{$'$} \def\cprime{$'$}

}


\end{document}